\numberwithin{equation}{section}
\numberwithin{figure}{section}
\theoremstyle{plain}
\newtheorem{thm}{Theorem}[section]
\newtheorem{lem}[thm]{Lemma}
\theoremstyle{remark}
\newcommand{\M}{\operatorname{M}}
\title{A Generalization of Aztec Dragons}
\author{Tri Lai\footnote{This research was supported in part by the Institute for Mathematics and its Applications with funds provided by the National Science Foundation (grant no. DMS-0931945).}\\
\small Institute for Mathematics and its Applications\\[-0.8ex]
\small University of Minnesota\\[-0.8ex]
\small Minneapolis, MN 55455\\
\small email: \texttt{tmlai@ima.umn.edu}\\
\small website: \url{http://www.ima.umn.edu/~tmlai/}
}
\date{\small Mathematics Subject Classifications: 05A15,  05B45}
\begin{document}
\maketitle

\begin{abstract}
Aztec dragons are lattice regions first introduced by James Propp, which have the number of tilings given by a power of $2$. This family of regions has been investigated further by a number of authors. In this paper, we consider a generalization of the
 Aztec dragons to two new families of $6$-sided regions. By using Kuo's graphical condensation method, we prove that the tilings of the new regions are always enumerated by powers of $2$ and $3$.

\bigskip\noindent \textbf{Keywords:} perfect matching, lozenge tiling, dual graph,  graphical condensation.
\end{abstract}

\section{Introduction}
A lattice partitions the plane into \emph{fundamental regions}. A \textit{region} considered in this paper is a finite connected union of fundamental regions. We call the union of any two fundamental regions sharing an edge a \textit{tile}. We would like to know how many different ways to cover a certain region by tiles so that there are no gaps or overlaps; and such coverings are called \textit{tilings}. We use the notation $\M(R)$ for the number of tilings of a region $R$.

\begin{figure}\centering
\setlength{\unitlength}{4144sp}%
\begingroup\makeatletter\ifx\SetFigFont\undefined%
\gdef\SetFigFont#1#2#3#4#5{%
  \reset@font\fontsize{#1}{#2pt}%
  \fontfamily{#3}\fontseries{#4}\fontshape{#5}%
  \selectfont}%
\fi\endgroup%
\resizebox{12cm}{!}{
\begin{picture}(0,0)%
\includegraphics{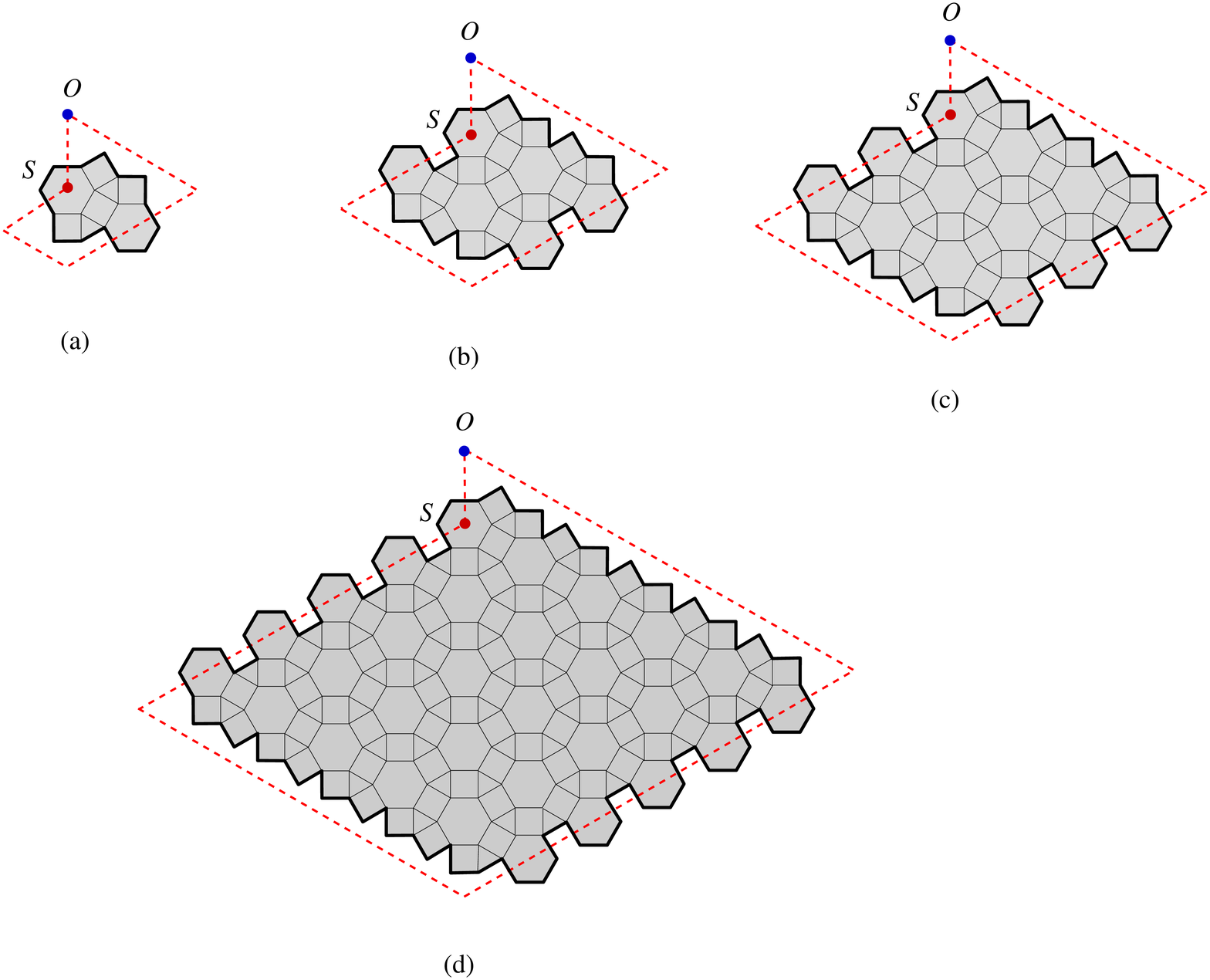}%
\end{picture}%
%
%

\begin{picture}(17894,14214)(1398,-13528)
\put(6031,-7834){\rotatebox{30.0}{\makebox(0,0)[lb]{\smash{{\SetFigFont{20}{24.0}{\rmdefault}{\mddefault}{\itdefault}{$a=5$}%
}}}}}
\put(8453,-12634){\makebox(0,0)[lb]{\smash{{\SetFigFont{20}{24.0}{\rmdefault}{\mddefault}{\itdefault}{$c=0$}%
}}}}
\put(11341,-11611){\rotatebox{30.0}{\makebox(0,0)[lb]{\smash{{\SetFigFont{20}{24.0}{\rmdefault}{\mddefault}{\itdefault}{$\overline{d}=6$}%
}}}}}
\put(6271,-11671){\rotatebox{330.0}{\makebox(0,0)[lb]{\smash{{\SetFigFont{20}{24.0}{\rmdefault}{\mddefault}{\itdefault}{$b=5$}%
}}}}}
\put(11183,-7054){\rotatebox{330.0}{\makebox(0,0)[lb]{\smash{{\SetFigFont{20}{24.0}{\rmdefault}{\mddefault}{\itdefault}{$\overline{e}=6$}%
}}}}}
\put(7696,-6346){\makebox(0,0)[lb]{\smash{{\SetFigFont{20}{24.0}{\rmdefault}{\mddefault}{\itdefault}{$\overline{f}=1$}%
}}}}
\put(2071,-1546){\makebox(0,0)[lb]{\smash{{\SetFigFont{20}{24.0}{\rmdefault}{\mddefault}{\itdefault}{$\overline{f}=1$}%
}}}}
\put(7771,-811){\makebox(0,0)[lb]{\smash{{\SetFigFont{20}{24.0}{\rmdefault}{\mddefault}{\itdefault}{$\overline{f}=1$}%
}}}}
\put(14686,-451){\makebox(0,0)[lb]{\smash{{\SetFigFont{20}{24.0}{\rmdefault}{\mddefault}{\itdefault}{$\overline{f}=1$}%
}}}}
\put(13881,-1446){\rotatebox{30.0}{\makebox(0,0)[lb]{\smash{{\SetFigFont{20}{24.0}{\rmdefault}{\mddefault}{\itdefault}{$a=3$}%
}}}}}
\put(7141,-1636){\rotatebox{30.0}{\makebox(0,0)[lb]{\smash{{\SetFigFont{20}{24.0}{\rmdefault}{\mddefault}{\itdefault}{$a=2$}%
}}}}}
\put(1621,-2536){\rotatebox{30.0}{\makebox(0,0)[lb]{\smash{{\SetFigFont{20}{24.0}{\rmdefault}{\mddefault}{\itdefault}{$a=1$}%
}}}}}
\put(1831,-3166){\rotatebox{330.0}{\makebox(0,0)[lb]{\smash{{\SetFigFont{20}{24.0}{\rmdefault}{\mddefault}{\itdefault}{$b=1$}%
}}}}}
\put(7096,-3331){\rotatebox{330.0}{\makebox(0,0)[lb]{\smash{{\SetFigFont{20}{24.0}{\rmdefault}{\mddefault}{\itdefault}{$b=2$}%
}}}}}
\put(13936,-3766){\rotatebox{330.0}{\makebox(0,0)[lb]{\smash{{\SetFigFont{20}{24.0}{\rmdefault}{\mddefault}{\itdefault}{$b=3$}%
}}}}}
\put(17581,-4111){\rotatebox{30.0}{\makebox(0,0)[lb]{\smash{{\SetFigFont{20}{24.0}{\rmdefault}{\mddefault}{\itdefault}{$\overline{d}=4$}%
}}}}}
\put(10081,-3526){\rotatebox{30.0}{\makebox(0,0)[lb]{\smash{{\SetFigFont{20}{24.0}{\rmdefault}{\mddefault}{\itdefault}{$\overline{d}=3$}%
}}}}}
\put(4152,-3413){\rotatebox{30.0}{\makebox(0,0)[lb]{\smash{{\SetFigFont{20}{24.0}{\rmdefault}{\mddefault}{\itdefault}{$\overline{d}=2$}%
}}}}}
\put(3706,-1328){\rotatebox{330.0}{\makebox(0,0)[lb]{\smash{{\SetFigFont{20}{24.0}{\rmdefault}{\mddefault}{\itdefault}{$\overline{e}=2$}%
}}}}}
\put(10111,-796){\rotatebox{330.0}{\makebox(0,0)[lb]{\smash{{\SetFigFont{20}{24.0}{\rmdefault}{\mddefault}{\itdefault}{$\overline{e}=3$}%
}}}}}
\put(17091,-726){\rotatebox{330.0}{\makebox(0,0)[lb]{\smash{{\SetFigFont{20}{24.0}{\rmdefault}{\mddefault}{\itdefault}{$\overline{e}=4$}%
}}}}}
\put(8506,-3961){\makebox(0,0)[lb]{\smash{{\SetFigFont{20}{24.0}{\rmdefault}{\mddefault}{\itdefault}{$c=0$}%
}}}}
\put(2761,-3676){\makebox(0,0)[lb]{\smash{{\SetFigFont{20}{24.0}{\rmdefault}{\mddefault}{\itdefault}{$c=0$}%
}}}}
\put(15451,-4696){\makebox(0,0)[lb]{\smash{{\SetFigFont{20}{24.0}{\rmdefault}{\mddefault}{\itdefault}{$c=0$}%
}}}}
\end{picture}%
}
\caption{The Aztec dragons of order (a) $1$, (b) $2$, (c) $3$, and (d) $5$.}
\label{Dragonnew}
\end{figure}

We consider a symmetric dissection of the plane into
equilateral triangles, squares, and regular hexagons, with 4 polygons meeting at
each vertex and with no two squares sharing an edge (see Figure \ref{Dragonnew}). We call the resulting lattice the \emph{dragon lattice}. On this lattice, James Propp introduced a family of regions called \textit{Aztec dragons} \cite{Propp}, as an analogue of \emph{Aztec diamonds} \cite{Elkies1,Elkies2}. Figures \ref{Dragonnew}(a)--(d) show the Aztec dragons of order $1$, $2$, $3$ and $5$, respectively (see the shaded regions). James Propp conjectured that the number of tilings of the Aztec dragon is always a power of $2$ (see Problem 15  in \cite{Propp}). The conjecture was proven by Ben Wieland (in unpublished work, as announced in \cite{Propp}) and Ciucu (see \cite[Corollary 7.2]{Ciucu}). In particular, they showed that:
\begin{thm}[Aztec Dragon Theorem]
The number of tilings of the Aztec dragon of order $n$ is $2^{n(n+1)}$.
\end{thm}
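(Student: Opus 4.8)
The plan is to pass to the planar dual graph $G_n$: its vertices are the fundamental regions making up the Aztec dragon of order $n$, and two of them are joined by an edge exactly when the corresponding fundamental regions share a side. A tiling of the dragon is then nothing but a perfect matching of $G_n$, so the theorem is equivalent to the matching statement $\M(G_n)=2^{n(n+1)}$. Before computing anything I would record the arithmetic of the target: setting $f(n)=2^{n(n+1)}$ one checks $f(0)=1$, $f(1)=4$, and $f(n+1)\,f(n-1)=4\,f(n)^2$; writing $r_n=f(n)/f(n-1)$ this last identity becomes $r_{n+1}=4\,r_n$, so $2^{n(n+1)}$ is the \emph{unique} sequence with $f(0)=1$, $f(1)=4$ obeying the quadratic recurrence $f(n+1)f(n-1)=4f(n)^2$. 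Hence it suffices to show that the matching numbers $\M(G_n)$ satisfy this same recurrence together with the same two initial values.

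To produce the recurrence I would apply Kuo's graphical condensation to $G_{n+1}$. The graph is planar and bipartite, so I $2$-color the fundamental regions and pick four vertices $a,c$ of one color and $b,d$ of the other, appearing in the cyclic order $a,b,c,d$ on the outer face and placed at distinguished corners of the dragon. Kuo's balanced condensation then gives
\[
\M(G_{n+1})\,\M\!\left(G_{n+1}-\{a,b,c,d\}\right)=\M\!\left(G_{n+1}-\{a,b\}\right)\M\!\left(G_{n+1}-\{c,d\}\right)+\M\!\left(G_{n+1}-\{a,d\}\right)\M\!\left(G_{n+1}-\{b,c\}\right).
\]
The decisive step is to identify the five minors. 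Deleting a boundary vertex forces a chain of tiles (lozenges and squares) along the adjacent side of the region; after removing all forced tiles — each of which only removes an already-matched pair and multiplies the count by a determined power of $2$ — I expect $G_{n+1}-\{a,b,c,d\}$ to reduce to $G_{n-1}$ and each of the remaining four minors to reduce to $G_n$, so that the identity collapses to $\M(G_{n+1})\,\M(G_{n-1})=4\,\M(G_n)^2$ once the forced-tile powers of $2$ are collected. With the base values $\M(G_0)=1$ and $\M(G_1)=4$ checked by hand, this recurrence forces $\M(G_n)=2^{n(n+1)}$ by the uniqueness noted above.

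The main obstacle is exactly this geometric bookkeeping: the minors produced by Kuo's lemma are generally \emph{not} honest smaller Aztec dragons but dented hexagonal regions, so I must either peel the dents away by a careful forced-edge analysis (tracking each factor of $2$) or, more robustly, enlarge the problem to a two-parameter family of dented dragons, prove a product formula for the whole family by the same condensation and a coupled induction, and only then specialize. A secondary point is verifying that the four chosen vertices genuinely satisfy Kuo's cyclic-order and color hypotheses and that the degenerate regions $G_0,G_1$ seed the recurrence correctly. As an independent check and an alternative route, one may instead apply urban renewal simultaneously to the squares in the outer ring of the dragon, together with the contractions forced at the degree-two vertices coming from the triangular and hexagonal cells; this should reduce $G_n$ directly to $G_{n-1}$ while extracting a factor $2^{2n}$, yielding the linear recurrence $\M(G_n)=2^{2n}\,\M(G_{n-1})$ that telescopes, via $\sum_{k=1}^{n}2k=n(n+1)$, to $2^{n(n+1)}$.
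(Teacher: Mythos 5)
Your overall strategy (dual graph, Kuo condensation, induction from small cases) is the right family of ideas, but the concrete route you propose cannot work, and it fails exactly at the point the paper's proof is built to circumvent. First, your bookkeeping mechanism is faulty: in unweighted enumeration, deleting a forced edge together with its two endpoints leaves the number of perfect matchings \emph{unchanged}, so forced tiles never ``multiply the count by a determined power of $2$''. Consequently, if one really could choose $u,v,w,t$ on $G_{n+1}$ so that, after removing forced edges, the four double deletions all became $G_n$ and the quadruple deletion became $G_{n-1}$, Kuo's identity would read
\begin{equation*}
\M(G_{n+1})\,\M(G_{n-1})=\M(G_n)^2+\M(G_n)^2=2\,\M(G_n)^2,
\end{equation*}
not $4\,\M(G_n)^2$; and since the true values $\M(G_m)=2^{m(m+1)}$ satisfy $\M(G_{n+1})\M(G_{n-1})=4\M(G_n)^2$, no such choice of four vertices can exist --- the identification of minors you ``expect'' is impossible, not merely delicate. (One can push this further: every product of dragon counts is $2$ raised to an even power, so neither term on the right of a Kuo identity could ever equal $2\M(G_n)^2$; condensation genuinely cannot close within the one-parameter dragon family.) What actually happens, in the paper's Lemma \ref{dragonlem2}(b) applied with $a=b=n$, $c=0$, is that only one of the two products stays inside the family:
\begin{equation*}
\M\left(DR^{(1)}_{n,n,0}\right)\M\left(DR^{(1)}_{n-2,n-2,0}\right)=\M\left(DR^{(1)}_{n-1,n-1,0}\right)^2+\M\left(DR^{(1)}_{n,n,1}\right)\M\left(DR^{(2)}_{n+1,n+1,1}\right),
\end{equation*}
where the last two factors are not Aztec dragons at all, and the final one lies in a genuinely different family of regions.

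Your fallback sentence --- enlarge to a multi-parameter family, prove a product formula for the whole family by condensation and a coupled induction, then specialize --- is precisely the paper's proof: it introduces the three-parameter regions $DR^{(1)}_{a,b,c}$ and $DR^{(2)}_{a,b,c}$, establishes the five coupled recurrences (R1)--(R5) by Kuo condensation (note that (R3) entangles the two families with each other, which is why a single family does not suffice), proves $\M(DR^{(1)}_{a,b,c})=\Phi(a,b,c)$ and $\M(DR^{(2)}_{a,b,c})=\Psi(a,b,c)$ simultaneously by induction on the perimeter of the bounding contour (using flip symmetries to move between subdomains and computer-checked base cases), and then reads off the Aztec Dragon Theorem as the specialization $DR^{(1)}_{n,n,0}$ of Theorem \ref{dragonthm}. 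In your write-up, however, this is a one-line gesture; choosing the right family, the right system of recurrences, and an induction that actually closes is the entire content of the argument, and none of it is supplied. Your alternative route via urban renewal is a legitimate known strategy (close in spirit to the Cottrell--Young shuffling proof cited in the paper), but as stated (``should reduce $G_n$ to $G_{n-1}$ while extracting a factor $2^{2n}$'') it is an unverified claim rather than a proof.
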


A \emph{perfect matching} of a graph $G$ is a collection of disjoint edges covering all vertices of $G$. The tilings of a region $R$ can be identified with the perfect matchings of its \emph{dual graph} (the graph whose vertices are fundamental regions in $R$ and whose edges connect precisely two fundamental regions sharing an edge).  In the view of this, we denote by $\M(G)$ the number of perfect matchings of a graph $G$. The dual graph of an Aztec dragon is called an \emph{Aztec dragon graph}.

\begin{figure}\centering
\includegraphics[width=12cm]{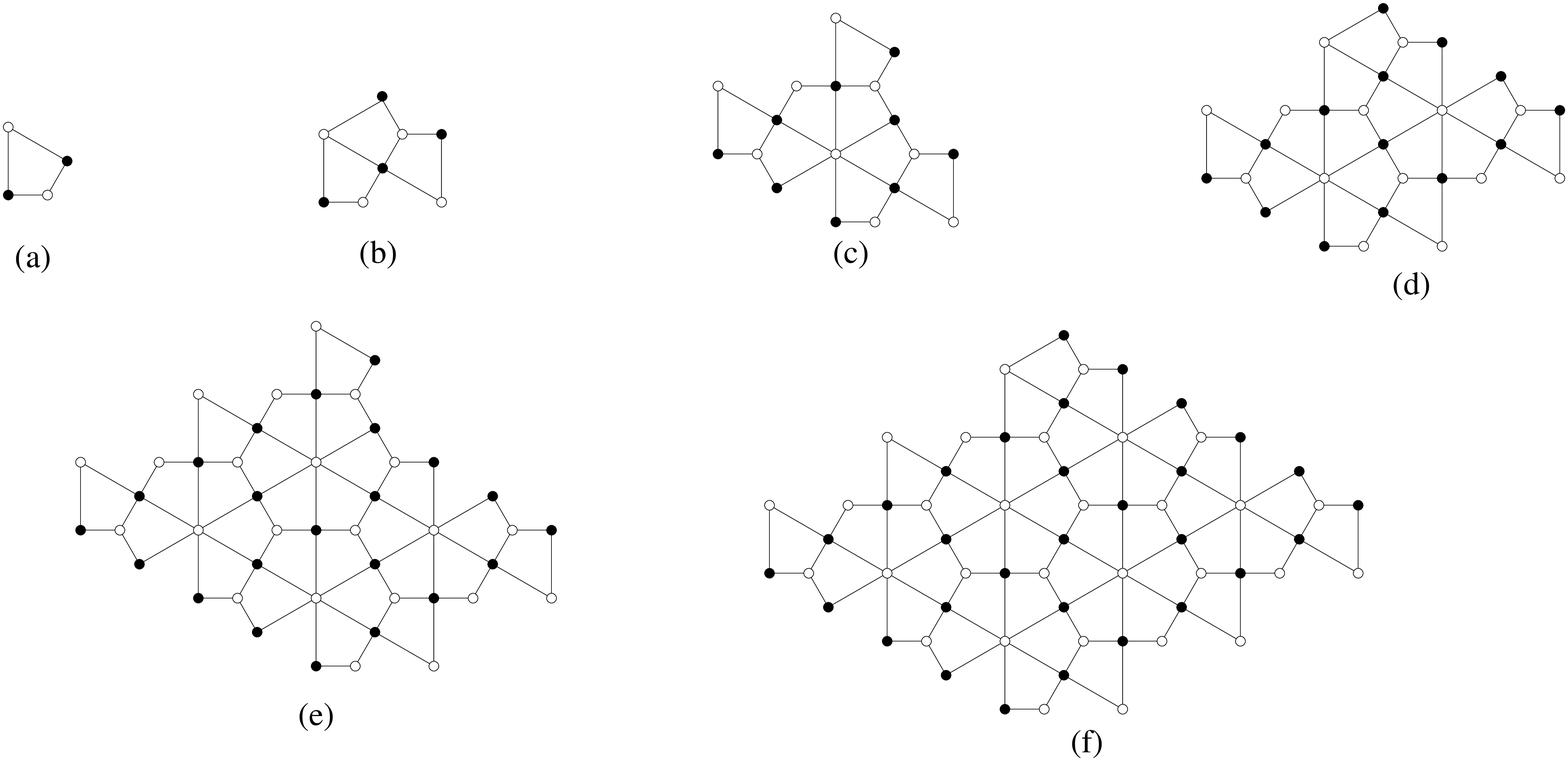}
\caption{The dragon graphs of order $n/2$ for $1\leq n\leq 6$.}
\label{Dragongraph}
\end{figure}

A further proof of the Aztec Dragon Theorem was provided by Cottrell and Young \cite{Cot}, by using the domino shuffling method on the Aztec dragon graphs. They also proved a related result for Aztec dragon graphs of half-integer order (see Figure \ref{Dragongraph}).  Recently, the Aztec dragon graphs have been investigated under the relations to cluster algebras (see \cite{Leo} and \cite{Zhang}). In \cite{Leo}, the authors introduced a larger family of $2$-parameter graphs called \emph{Aztec castles} and proved that the numbers of perfect matchings of these graphs are always some power of 2 (see Theorem 2.1 \cite{Leo}).

In this paper, we extend the family of the Aztec dragons to two new families of $6$-sided regions, which we call  \textit{dragon regions} (the precise definition of a dragon region will be provided in Section 2).
By using Kuo's graphical condensation method \cite{Kuo}, we prove that the number of tilings of a dragon region is always given by powers of $2$ and $3$ (see Theorem \ref{dragonthm}). The result generalizes the Aztec Dragon Theorem as well as the related results in \cite{Cot} and \cite{Leo}. Finally, in Section 5 we point out that our method can be used to obtain a related result when the tiles of the Aztec dragons carry some weights. Moreover, by the same technique, we obtain new families of regions having similar structure whose tilings are also enumerated by powers of $2$ and $3$.

\section{Dragon regions}

Our goal in this section is to generalize the family of Aztec dragons.

  The centers of the hexagonal fundamental regions on the dragon lattice form a triangular lattice. We consider a six-sided contour on this triangular lattice as follows.

  Starting from the center $S$ of a hexagon, we go $a$ units\footnote{The unit here is the smallest distance between the centers of two hexagonal fundamental regions in the dragon lattice, i.e the unit of the above triangular lattice.} southwest,  $b$ units southeast, $c$ units north, $\overline{d}$ units northeast, and $\overline{e}$ units northwest. After $5$ steps, we stop at the center $O$ of some hexagon. We adjust $\overline{e}$ so that $S$ and $O$ are on the same vertical line. Finally, we close the contour by go $\overline{f}$ units down or up depending on whether $O$ is above or below $S$ (see the dotted contour in Figures \ref{Dragonnew2}(a) and (b)).  The above choice of $\overline{e}$ requires
\begin{equation}\label{constrain1}
a+\overline{e}=b+\overline{d}.
\end{equation}

The vertex $O$ is above or below $S$ when $a$ is less or greater than $c+\overline{d}$, respectively. If $a>c+\overline{d}$, then the closure of the contour deduces that $\overline{f}=a-c-\overline{d}$; in the case where $a\leq c+\overline{d}$, the closure implies $\overline{f}=c+\overline{d}-a$. Thus, we always have
\begin{equation}\label{constrain2}
\overline{f}=|a-c-\overline{d}|.
\end{equation}

Next, we consider the region $R$ consisting of all fundamental regions that are restricted or intersected by the contour (see the union of all fundamental regions in Figures \ref{Dragonnew2}(a) and \ref{Dragonnew3}(a)). We now remove all the boundary squares of $R$. We also remove the boundary hexagons and the triangles adjacent to the boundary squares along the $b$- and $\overline{e}$-sides as well as the $\overline{f}$-side if  $a>c+\overline{d}$. The resulting region $R_1$ is illustrated as the shaded regions in Figures \ref{Dragonnew2}(a) and \ref{Dragonnew3}(a).

Color the plane black and white so that two fundamental regions sharing an edge have opposite color. Without loss of generality, we assume that all squares are colored black and all hexagon and triangles are colored white. It is easy to see that if a region admits a tiling, then the numbers of black and white fundamental regions of its must be equal. We say such a region is \emph{balanced}. The balance of the region $R_1$ yields
\begin{equation}\label{constrain3}
\overline{d}=2b-a-2c+1.
\end{equation}
Indeed, 
one can verify that the number of white fundamental regions in $R$ is one more than the number of black fundamental regions. Since $R_1$ is balanced, removing $R_1$ from $R$ does not alter the difference between the numbers of black and white fundamental regions. Enumerating exactly the black and white fundamental regions in $R-R_1$, we have $b+\overline{e}+\overline{f}+1=a+c+\overline{d}$ if $a>c+\overline{d}$ and $b+\overline{e}-\overline{f}+1=a+c+\overline{d}$ if $a\leq c+\overline{d}$.
Since $\overline{e}=b+\overline{d}-a$ and $\overline{f}=|a-c-\overline{d}|$, we obtain $\overline{d}=2b-a-2c+1$ in both cases.

At this point, we can write $\overline{d},\overline{e},\overline{f}$ all in terms of $a,b,c$ as
\begin{equation}
\overline{d}=2b-a-2c+1,
\end{equation}
 \begin{equation}
\overline{e}=b+\overline{d}-a=3b-2a-2c+1,
\end{equation}
\begin{equation}
\overline{f}=|a-c-\overline{d}|=|2b-2a-c+1|.
\end{equation}
This means that our contour and the corresponding region are determined by only three parameters $a,b,c$. We denote by $\mathcal{C}^{(1)}(a,b,c)$ the contour, and $DR^{(1)}_{a,b,c}$ the region $R_1$.

Next, we consider a variation $DR^{(2)}_{a,b,c}$ of the region $DR^{(1)}_{a,b,c}$ as follows. We define a six-sided contour similar to $\mathcal{C}^{(1)}_{a,b,c}$ by starting from the center $S$ of a hexagon, then going along  five sides of lengths $a,b,c,\widetilde{d},\widetilde{e}$, so that we end up at the center $O$ of a hexagon, that is vertically below or above $S$, and connecting $S$ and $O$ by a vertical side of length $\widetilde{f}$. Similar to the constraints (\ref{constrain1}) and (\ref{constrain2}) in the case of the contour $\mathcal{C}^{(1)}(a,b,c)$, we also have $a+\widetilde{e}=b+\widetilde{d}$ and $\widetilde{f}=|a-c-\widetilde{d}|$.

We consider the region $R'$ that contains all the fundamental regions that are restricted or intersected by the new contour (see Figures \ref{Dragonnew2}(b) and \ref{Dragonnew3}(b)).  Similar to the contour $\mathcal{C}^{(1)}(a,b,c)$, we remove all the boundary squares of $R'$. In addition, all the boundary hexagons and the triangles adjacent to the boundary squares along the $a$-, $c$-, and $\overline{d}$-sides, as well as the $\overline{f}$-side if $a\leq c+\overline{d}$, have been removed. The resulting region $R'_2$ is illustrated as the shaded regions in Figures \ref{Dragonnew2}(b) and \ref{Dragonnew3}(b).

Similar to the constraint (\ref{constrain3}), to guarantee that $R'_2$  is balanced, we must have $b+\widetilde{e}+\widetilde{f}-1=a+c+\widetilde{d}$ if $a>c+\widetilde{d}$, and $b+\widetilde{e}-\widetilde{f}-1=a+c+\widetilde{d}$ if $a\leq c+\widetilde{d}$.
Since $\widetilde{e}=b+\widetilde{d}-a$ and $\widetilde{f}=|a-c-\widetilde{d}|$, we have $\widetilde{d}=2b-a-2c-1$. Thus, $\widetilde{d},\widetilde{e},\widetilde{f}$ can be written in terms of $a,b,c$ as
\begin{equation}
\widetilde{d}=2b-a-2c-1,
\end{equation}
 \begin{equation}
\widetilde{e}=b+\widetilde{d}-a=3b-2a-2c-1,
\end{equation}
\begin{equation}
\widetilde{f}=|a-c-\widetilde{d}|=|2b-2a-c-1|.
\end{equation}
We denote by $\mathcal{C}^{(2)}(a,b,c)$ the contour, and $DR^{(2)}_{a,b,c}$ the region $R'_2$.

We call the two  regions  $DR^{(1)}_{a,b,c}$ and $DR^{(2)}_{a,b,c}$ \textit{dragon regions}, and the dual graphs of  the dragon regions  \textit{dragon graphs}. The numbers of tilings of the dragon regions are given by the theorem stated below.

\begin{figure}\centering
\setlength{\unitlength}{3947sp}%
\begingroup\makeatletter\ifx\SetFigFont\undefined%
\gdef\SetFigFont#1#2#3#4#5{%
  \reset@font\fontsize{#1}{#2pt}%
  \fontfamily{#3}\fontseries{#4}\fontshape{#5}%
  \selectfont}%
\fi\endgroup%
\resizebox{14cm}{!}{
\begin{picture}(0,0)%
\includegraphics{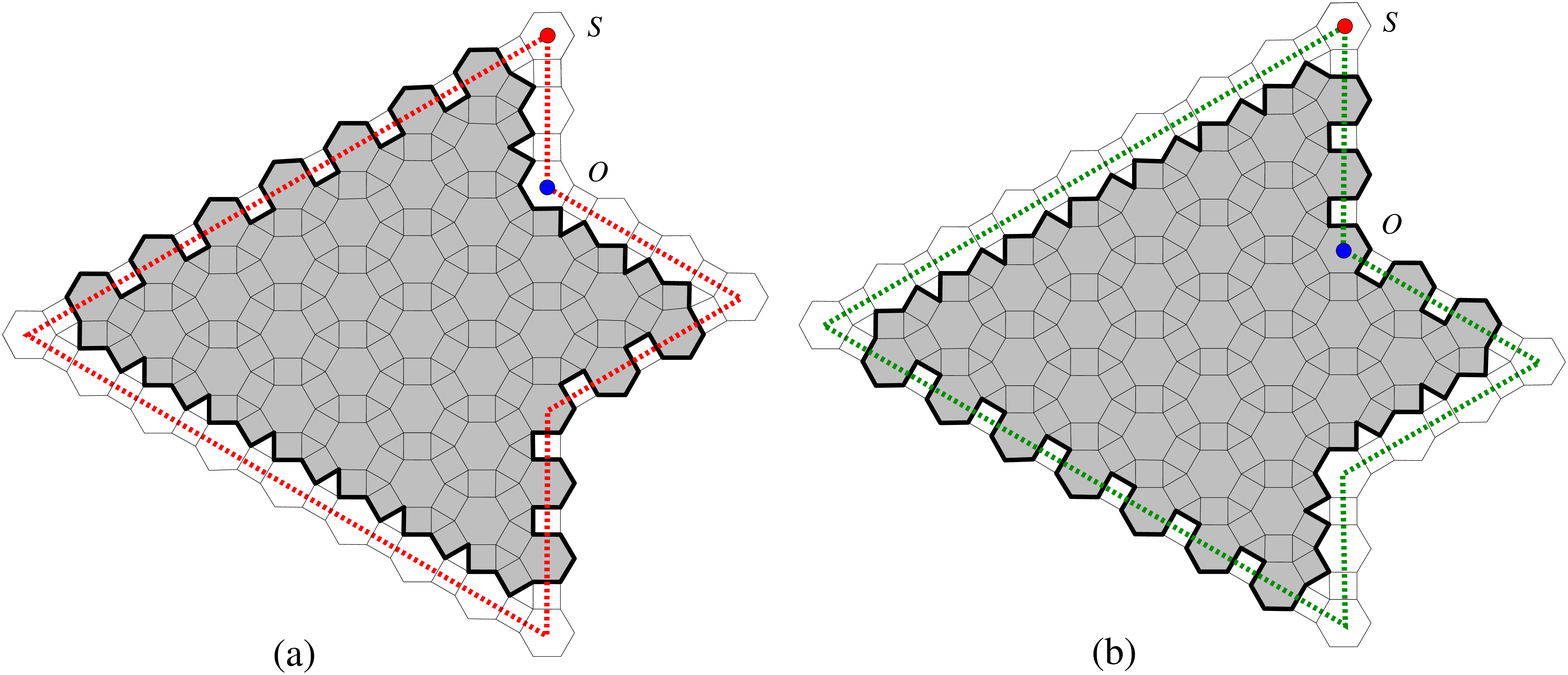}%
\end{picture}%

\begin{picture}(26712,11581)(2696,-11195)
\put(27520,-4116){\rotatebox{330.0}{\makebox(0,0)[lb]{\smash{{\SetFigFont{35}{40.8}{\rmdefault}{\mddefault}{\itdefault}{$\widetilde{e}=3$}%
}}}}}
\put(27417,-7693){\rotatebox{30.0}{\makebox(0,0)[lb]{\smash{{\SetFigFont{35}{40.8}{\rmdefault}{\mddefault}{\itdefault}{$\widetilde{d}=3$}%
}}}}}
\put(26502,-9458){\rotatebox{90.0}{\makebox(0,0)[lb]{\smash{{\SetFigFont{35}{40.8}{\rmdefault}{\mddefault}{\itdefault}{$c=2$}%
}}}}}
\put(26712,-2558){\rotatebox{90.0}{\makebox(0,0)[lb]{\smash{{\SetFigFont{35}{40.8}{\rmdefault}{\mddefault}{\itdefault}{$\widetilde{f}=3$}%
}}}}}
\put(20517,-8848){\rotatebox{330.0}{\makebox(0,0)[lb]{\smash{{\SetFigFont{35}{40.8}{\rmdefault}{\mddefault}{\itdefault}{$b=8$}%
}}}}}
\put(20892,-1933){\rotatebox{30.0}{\makebox(0,0)[lb]{\smash{{\SetFigFont{35}{40.8}{\rmdefault}{\mddefault}{\itdefault}{$a=8$}%
}}}}}
\put(13051,-1831){\rotatebox{90.0}{\makebox(0,0)[lb]{\smash{{\SetFigFont{35}{40.8}{\rmdefault}{\mddefault}{\itdefault}{$\overline{f}=2$}%
}}}}}
\put(13851,-3001){\rotatebox{330.0}{\makebox(0,0)[lb]{\smash{{\SetFigFont{35}{40.8}{\rmdefault}{\mddefault}{\itdefault}{$\overline{e}=3$}%
}}}}}
\put(6736,-2341){\rotatebox{30.0}{\makebox(0,0)[lb]{\smash{{\SetFigFont{35}{40.8}{\rmdefault}{\mddefault}{\itdefault}{$a=8$}%
}}}}}
\put(6681,-8631){\rotatebox{330.0}{\makebox(0,0)[lb]{\smash{{\SetFigFont{35}{40.8}{\rmdefault}{\mddefault}{\itdefault}{$b=8$}%
}}}}}
\put(13021,-9391){\rotatebox{90.0}{\makebox(0,0)[lb]{\smash{{\SetFigFont{35}{40.8}{\rmdefault}{\mddefault}{\itdefault}{$c=3$}%
}}}}}
\put(13651,-6771){\rotatebox{30.0}{\makebox(0,0)[lb]{\smash{{\SetFigFont{35}{40.8}{\rmdefault}{\mddefault}{\itdefault}{$\overline{d}=3$}%
}}}}}
\end{picture}}
\caption{The dragon regions (a) $DR^{(1)}_{8,8,3}$ and (b) $DR^{(2)}_{8,8,2}$.}
\label{Dragonnew2}
\end{figure}

\begin{figure}\centering
\setlength{\unitlength}{3947sp}%
\begingroup\makeatletter\ifx\SetFigFont\undefined%
\gdef\SetFigFont#1#2#3#4#5{%
  \reset@font\fontsize{#1}{#2pt}%
  \fontfamily{#3}\fontseries{#4}\fontshape{#5}%
  \selectfont}%
\fi\endgroup%
\resizebox{14cm}{!}{
\begin{picture}(0,0)%
\includegraphics{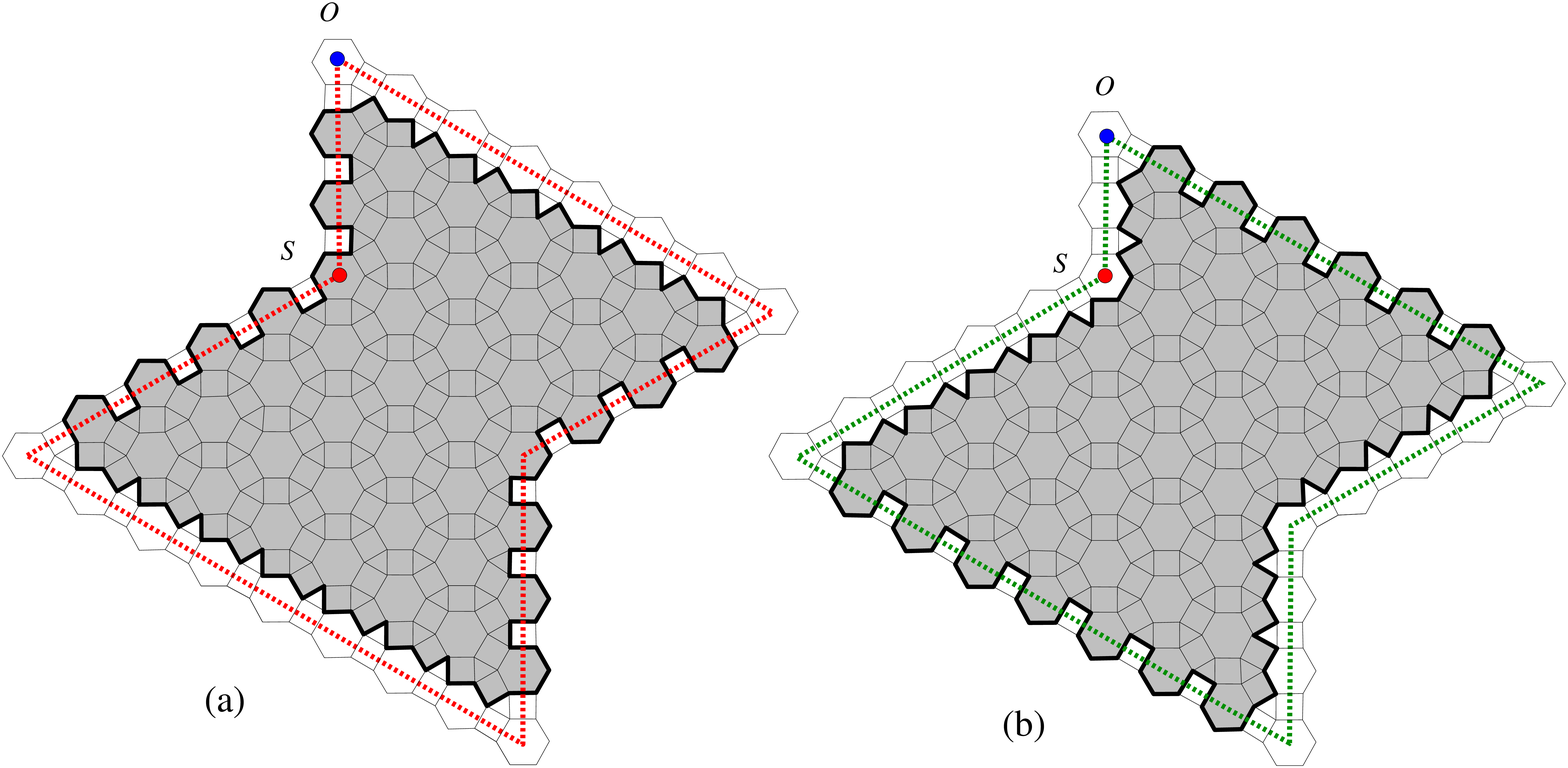}%
\end{picture}%
%
%

\begin{picture}(27907,13818)(2496,-13055)
\put(26506,-11236){\rotatebox{90.0}{\makebox(0,0)[lb]{\smash{{\SetFigFont{35}{40.8}{\rmdefault}{\mddefault}{\itdefault}{$c=3$}%
}}}}}
\put(18666,-5465){\rotatebox{30.0}{\makebox(0,0)[lb]{\smash{{\SetFigFont{35}{40.8}{\rmdefault}{\mddefault}{\itdefault}{$a=5$}%
}}}}}
\put(27706,-8761){\rotatebox{30.0}{\makebox(0,0)[lb]{\smash{{\SetFigFont{35}{40.8}{\rmdefault}{\mddefault}{\itdefault}{$\widetilde{d}=4$}%
}}}}}
\put(25861,-2896){\rotatebox{330.0}{\makebox(0,0)[lb]{\smash{{\SetFigFont{35}{40.8}{\rmdefault}{\mddefault}{\itdefault}{$\widetilde{e}=7$}%
}}}}}
\put(21541,-3421){\rotatebox{90.0}{\makebox(0,0)[lb]{\smash{{\SetFigFont{35}{40.8}{\rmdefault}{\mddefault}{\itdefault}{$\widetilde{f}=2$}%
}}}}}
\put(19927,-10876){\rotatebox{330.0}{\makebox(0,0)[lb]{\smash{{\SetFigFont{35}{40.8}{\rmdefault}{\mddefault}{\itdefault}{$b=8$}%
}}}}}
\put(7681,-2701){\rotatebox{90.0}{\makebox(0,0)[lb]{\smash{{\SetFigFont{35}{40.8}{\rmdefault}{\mddefault}{\itdefault}{$\overline{f}=3$}%
}}}}}
\put(12466,-1981){\rotatebox{330.0}{\makebox(0,0)[lb]{\smash{{\SetFigFont{35}{40.8}{\rmdefault}{\mddefault}{\itdefault}{$\overline{e}=7$}%
}}}}}
\put(14191,-7546){\rotatebox{30.0}{\makebox(0,0)[lb]{\smash{{\SetFigFont{35}{40.8}{\rmdefault}{\mddefault}{\itdefault}{$\overline{d}=4$}%
}}}}}
\put(4756,-5416){\rotatebox{30.0}{\makebox(0,0)[lb]{\smash{{\SetFigFont{35}{40.8}{\rmdefault}{\mddefault}{\itdefault}{$a=5$}%
}}}}}
\put(5881,-10591){\rotatebox{330.0}{\makebox(0,0)[lb]{\smash{{\SetFigFont{35}{40.8}{\rmdefault}{\mddefault}{\itdefault}{$b=8$}%
}}}}}
\put(12931,-10741){\rotatebox{90.0}{\makebox(0,0)[lb]{\smash{{\SetFigFont{35}{40.8}{\rmdefault}{\mddefault}{\itdefault}{$c=4$}%
}}}}}
\end{picture}}
\caption{The dragon regions (a) $DR^{(1)}_{5,8,4}$ and (b) $DR^{(2)}_{5,8,3}$.}
\label{Dragonnew3}
\end{figure}


\begin{thm}\label{dragonthm} Assume that $a$, $b$ and $c$ are three non-negative integers.

(a) If  $b\geq 2$, $2b-a-2c\geq-1$ and $3b-2a-2c\geq-1$, then
\begin{equation}\label{dragonequation1}
\M\left(DR^{(1)}_{a,b,c}\right)=2^{(b-c+1)(2b-a-c)+(a-b)^2}3^{\frac{(a-b+c)(a-b+c-1)}{2}}.
\end{equation}

(b) If  $b\geq 2$, $2b-a-2c\geq1$ and $3b-2a-2c\geq1$, then
\begin{equation}\label{dragonequation2}
\M\left(DR^{(2)}_{a,b,c}\right)=2^{(b-c-1)(2b-a-c)+(a-b)^2}3^{\frac{(a-b+c)(a-b+c+1)}{2}}.
\end{equation}
\end{thm}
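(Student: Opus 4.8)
The plan is to prove both parts simultaneously, by induction, using Kuo's graphical condensation on the dragon graphs. First I would pass from the regions to their dual graphs $G^{(1)}_{a,b,c}$ and $G^{(2)}_{a,b,c}$. These are planar and bipartite, with the squares forming one color class and the hexagons and triangles the other, so that $\M$ counts perfect matchings and Kuo's method applies. I would record the balance relation (\ref{constrain3}) and its analogue as the statement that each dragon region has equally many black and white fundamental regions; this equality is exactly what makes the condensation identity available.

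The core step is to apply Kuo's condensation theorem \cite{Kuo}: for a planar bipartite graph $G$ with equal color classes and four vertices $u,v,w,s$ on a common face in cyclic order, with $u,w$ one color and $v,s$ the other,
$$\M(G)\,\M(G-\{u,v,w,s\}) = \M(G-\{u,v\})\,\M(G-\{w,s\}) + \M(G-\{v,w\})\,\M(G-\{s,u\}).$$
The crux is to select $u,v,w,s$ as fundamental regions sitting at (or just inside) the corners of the contour $\mathcal{C}^{(1)}(a,b,c)$, respectively $\mathcal{C}^{(2)}(a,b,c)$, so that each of the five graphs appearing in the identity is again a dragon graph with explicitly shifted parameters. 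I expect that deleting vertices near the $a$-, $b$-, and $c$-sides shifts $a,b,c$ by $\pm1$, and—because the defining balance constraints for the two families differ only by the $\pm1$ in $\overline{d}$ versus $\widetilde{d}$—that the five terms will not all lie in a single family: condensing a type-$(1)$ region should produce type-$(2)$ regions among its factors, and vice versa. This forces the coupled induction in which (a) and (b) are established together.

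Once the recurrence(s) are in hand, I would substitute the conjectured closed forms and verify them by checking the exponent of $2$ and the exponent of $3$ separately; each reduces to a polynomial identity in $a,b,c$. Here the relations $E_2^{(1)}-E_2^{(2)}=2(2b-a-c)$ and $E_3^{(2)}-E_3^{(1)}=a-b+c$ between the exponents in (\ref{dragonequation1}) and (\ref{dragonequation2}) should make the bookkeeping transparent, so this verification is routine algebra. Finally I would settle the base cases—the smallest admissible values permitted by $b\geq 2$ together with the inequalities $2b-a-2c\geq-1$ and $3b-2a-2c\geq-1$ (respectively $\geq 1$)—either by direct enumeration of the degenerate regions or by invoking the Aztec Dragon Theorem, and I would check that these parameter inequalities are preserved under the downward shifts, so that the induction never leaves the admissible range.

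The main obstacle I anticipate is the combinatorial bookkeeping in the crux step: pinning down the exact four boundary vertices and proving, from the boundary-removal rules (which hexagons and triangles are deleted along which sides, and the case split $a>c+\overline{d}$ versus $a\leq c+\overline{d}$), that every one of the five graphs is genuinely a dragon graph of the stated type with the stated parameters. The case analysis on the sign of $a-c-\overline{d}$, which controls the orientation of the $\overline{f}$-side, together with the need to keep all five regions inside the admissible parameter range, is where the real work lies; the algebraic identity-checking and the base cases should be comparatively straightforward.
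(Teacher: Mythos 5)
Your overall strategy coincides with the paper's: pass to the dual graphs, apply Kuo condensation with the four vertices at corners of the contour, show the conjectured formulas satisfy the resulting recurrences, and close a coupled induction with finitely many base cases. Your anticipation that the two families must be proved together is also borne out, though the mechanism is narrower than you expect: generically all five graphs in a condensation identity stay within one family, and the cross-family coupling appears only in the degenerate case $c=0$, where the factor $G-\{t,u\}$ becomes (after a reflection) the dual graph of $DR^{(2)}_{3b-2a+1,2b-a+1,1}$ — this is the paper's recurrence (\ref{R3}).

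The genuine gap is at the step you wave off with ``I would check that these parameter inequalities are preserved under the downward shifts, so that the induction never leaves the admissible range.'' They are \emph{not} preserved, and no choice of the four vertices fixes this. When $a\in\{0,1\}$ (the paper's subdomain $\overline{\mathcal{D}}_2$), every condensation identity requires regions with first parameter $a-1$, $a-2$ or $a-3$, i.e.\ negative, so no recurrence applies at all; a similar failure occurs when $a>c+\overline{d}$ but $\overline{e}<\overline{d}$ (subdomain $\overline{\mathcal{D}}_4$). The paper closes the induction in these subdomains by a different idea: reflection symmetries of the regions themselves. Flipping $DR^{(1)}_{a,b,c}$ about the $b$-side gives $\M\left(DR^{(1)}_{a,b,c}\right)=\M\left(DR^{(1)}_{\overline{f},\overline{e},\overline{d}}\right)$, and flipping about a horizontal line swaps the two families, $\M\left(DR^{(1)}_{a,b,c}\right)=\M\left(DR^{(2)}_{b,a,\widetilde{f}}\right)$; one then checks the image triple lands in a subdomain already handled, and that $\Phi$, $\Psi$ are invariant under the corresponding parameter substitutions. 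Without these flips the induction cannot be completed. Two further points where your plan understates the work: a single recurrence does not suffice — the paper needs five, (\ref{R1})--(\ref{R5}), because the forced-edge analysis and even the shape of the identity change according to whether $c=0$, $\overline{d}=0$, and the sign of $a-c-\overline{d}$ (when $\overline{d}=0$ one factor becomes the parameter-permuted $DR^{(i)}_{c,b-1,a-1}$); and the base cases are not merely the smallest admissible regions but all triples with perimeter at most $15$, or $b\leq 4$, or $c+\overline{d}\leq 2$ — some $53+28$ regions, verified by computer rather than by hand or by the Aztec Dragon Theorem.
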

We note that we assume the condition $b\geq 2$ in Theorem \ref{dragonthm} to guarantee that our regions are not empty.

\medskip

The Aztec dragon of order $n$ is exactly the region $DR^{(1)}_{n,n,0}$ (see Figure \ref{Dragonnew}), thus the Aztec dragon Theorem is a consequence of Theorem \ref{dragonthm}. Moreover, the Aztec dragon graph of order $n+\frac{1}{2}$ is isomorphic to the dual graph of the region $DR^{(2)}_{n+1,n+2,1}$, so  our theorem has Cottrel and Young's theorem (Theorem 1 in \cite{Cot}) as a special case.

We conclude this section by noticing that the Aztec castles in \cite{Leo} correspond to the dual graphs of the dragon regions $DR^{(i)}_{a,b,c}$'s, where $a-b+c$ is $-1$ or $0$.

\section{Recurrences for the numbers of tilings of dragon regions}

Consider the following system of recurrences. Here, we use the notations $\bigstar(a,b,c)$ and $\lozenge(a,b,c)$ for some functions from $\mathbb{Z}^3$ to $\mathbb{Z}$.

\medskip

\begin{equation}\tag{R1}
\begin{split}
\bigstar(a,b,c)\bigstar(a-3,b-3,c-2)=\bigstar(a-2,b-1,c)\bigstar(a-1,b-2,c-2)\\+\bigstar(a-1,b-1,c-1)\bigstar(a-2,b-2,c-1).
\end{split} \label{R1}
\end{equation}

\medskip

\begin{equation}\tag{R2}
\bigstar(a,b,c)\bigstar(a-2,b-2,c)=\bigstar(a-1,b-1,c)^2+\bigstar(a,b,c+1)\bigstar(a-2,b-2,c-1).
\label{R2}
\end{equation}

\medskip

\begin{equation}\tag{R3}
\begin{cases}
 \begin{split}
 \bigstar(a,b,0)\bigstar(a-2,b-2,0)=\bigstar(a-1,b-1,0)^2\\+\bigstar(a,b,1)\lozenge(3b-2a+1,2b-a+1,1);\end{split}\\
 \begin{split}\lozenge(a,b,0)\lozenge(a-2,b-2,0)=\lozenge(a-1,b-1,0)^2\\
 +\lozenge(a,b,1)\bigstar(3b-2a-1,2b-a-1,1). \end{split}
 \end{cases}
\label{R3}
\end{equation}

\medskip

\begin{equation}\tag{R4}
  \begin{split}
        \bigstar(a,b,c)\bigstar(a-2,b-3,c-2)=\bigstar(a-1,b-1,c)\bigstar(a-1,b-2,c-2)\\+\bigstar(a-2,b-2,c-1)\bigstar(a,b-1,c-1).
 \end{split}\label{R4}
\end{equation}

\medskip

\begin{equation}\tag{R5}
    \begin{split}
      \bigstar(a,b,c)\bigstar(a-2,b-3,c-2)=\bigstar(c,b-1,a-1)\bigstar(a-1,b-2,c-2)\\+\bigstar(a-2,b-2,c-1)\bigstar(a,b-1,c-1).
    \end{split}
    \label{R5}
\end{equation}

In the next part of the section, we will show that $\M\left(DR^{(1)}_{a,b,c}\right)$ and $\M\left(DR^{(2)}_{a,b,c}\right)$ satisfy the above recurrences (with certain constraints).  Our proofs are based on the following Kuo's Condensation Theorem.

\begin{thm} [Kuo \cite{Kuo}] \label{kuothm} Let $G=(V_1,V_2,E)$ be a planar bipartite graph in which $|V_1|=|V_2|$. Assume that $u,v,w$ and $t$ are four vertices appearing in a cyclic order on a face of $G$ with $u,w\in V_1$ and $v,t\in V_2$. Then
\begin{align}\label{kuoeq}
\M(G)\M(G-\{u,v,w,t\})=&\M(G-\{u,v\})\M(G-\{w,t\})\notag\\
&+\M(G-\{t,u\})\M(G-\{v,w\}).
\end{align}
\end{thm}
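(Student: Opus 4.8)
The plan is to prove the identity by the method of superposition of perfect matchings: interpret every factor $\M(\cdot)$ as a count of matchings and compare the two sides of \eqref{kuoeq} term-by-term over a common family of auxiliary subgraphs. Each of the five matching numbers counts perfect matchings of $G$ with a prescribed set of uncovered (``defect'') vertices among $\{u,v,w,t\}$: $\M(G)$ covers all four, $\M(G-\{u,v,w,t\})$ leaves all four uncovered, and the remaining three terms leave exactly two of them uncovered. First I would form superpositions: for a product $\M(G-S_1)\,\M(G-S_2)$ I overlay a matching from each factor to obtain a spanning subgraph $H=N_1\cup N_2$, taken as a multiset of edges. In every case the resulting $H$ has degree $1$ at each of $u,v,w,t$ and degree $2$ at every other vertex, so all three products $\M(G)\M(G-\{u,v,w,t\})$, $\M(G-\{u,v\})\M(G-\{w,t\})$, and $\M(G-\{t,u\})\M(G-\{v,w\})$ are sums, over the same family $\mathcal{H}$ of such subgraphs $H$, of the number of ways to split $H$ into the relevant ordered pair of matchings.

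Next I would analyze the structure of $H\in\mathcal{H}$. Since every vertex has degree $1$ or $2$, each connected component is a path or a cycle; the degree-$1$ vertices are exactly $u,v,w,t$, so $H$ consists of some doubled edges and even cycles together with exactly two paths whose four endpoints are $u,v,w,t$. These two paths pair the four defects, and there are three possible pairings: Type A $\{u,v\},\{w,t\}$; Type B $\{u,t\},\{v,w\}$; and Type C $\{u,w\},\{v,t\}$. For a fixed $H$ I would count the splittings contributing to each product. A doubled edge forces both matchings to use it; an even cycle admits exactly two alternating splittings, contributing a common factor $2$ to all three counts; and on a path the two matchings must alternate, so the matching containing the edge at one endpoint is determined by the one at the other endpoint according to the parity of the path length. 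Recording which factor must cover each defect (for instance, in $\M(G-\{u,v\})\M(G-\{w,t\})$ the edges at $u,v$ lie in the second matching and those at $w,t$ in the first), a short parity check using $u,w\in V_1$, $v,t\in V_2$ shows that, writing $a(H),b(H),c(H)$ for the three per-$H$ counts, one gets $(a,b,c)=(1,1,0)$ for Type A, $(1,0,1)$ for Type B, and $(0,1,1)$ for Type C, each up to the common cycle factor $2^{k}$. Hence $a(H)=b(H)+c(H)$ for Types A and B, and summing over all $H$ would give exactly \eqref{kuoeq}.

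The hard part --- and the only place planarity and the cyclic order of $u,v,w,t$ are used --- is to rule out Type C, for which $a=0$ but $b+c=2$, so its presence would destroy the identity. Here I would argue that a Type-C subgraph contains two vertex-disjoint paths joining $u$ to $w$ and $v$ to $t$. Because $u,v,w,t$ lie in this cyclic order on a single face $F$ of the planar embedding, an arc drawn inside $F$ from $u$ to $w$ separates $v$ from $t$; closing the $u$--$w$ path with this arc yields a Jordan curve having $v$ and $t$ on opposite sides, so the $v$--$t$ path would be forced to cross the $u$--$w$ path, contradicting vertex-disjointness in a plane embedding. Note that bipartiteness alone does not exclude Type C, since $u$--$w$ and $v$--$t$ are even paths, so planarity is genuinely essential at this step. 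Therefore $\mathcal{H}$ contains no Type-C subgraph, the equality $a(H)=b(H)+c(H)$ holds for every $H\in\mathcal{H}$, and summing over $\mathcal{H}$ establishes \eqref{kuoeq}.
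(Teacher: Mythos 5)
Your proof is correct, but note that the paper itself offers no proof to compare it against: Theorem~\ref{kuothm} is imported verbatim from Kuo \cite{Kuo}, where it appears (with the same hypotheses) as one of the basic condensation identities. Measured against Kuo's original argument, your route is the same superposition idea in a slightly different packaging. All the checkable details in your write-up are right: each of the three products $\M(G)\M(G-\{u,v,w,t\})$, $\M(G-\{u,v\})\M(G-\{w,t\})$, $\M(G-\{t,u\})\M(G-\{v,w\})$ does count ordered pairs of matchings whose union lies in the common family $\mathcal{H}$ of multigraphs with degree $1$ exactly at $u,v,w,t$ and degree $2$ elsewhere; the component structure (doubled edges, alternating cycles contributing a common $2^k$, and exactly two defect-joining paths) is as you say; and the parity bookkeeping --- endpoint edges forced into a prescribed matching, consistency along a path determined by its length, odd paths joining opposite color classes --- yields precisely your table $(a,b,c)=(1,1,0)$, $(1,0,1)$, $(0,1,1)$ for the pairings $\{u,v\}/\{w,t\}$, $\{u,t\}/\{v,w\}$, $\{u,w\}/\{v,t\}$ respectively. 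You also correctly isolate where planarity enters: bipartiteness kills the crossing pairing only in the left-hand superposition (where all four endpoint edges lie in the perfect matching of $G$, forcing odd paths), while in the two right-hand superpositions the $u$--$w$ and $v$--$t$ paths are parity-consistent, so the Jordan-curve argument --- close the $u$--$w$ path with an arc through the common face and observe that $v$ and $t$ land on opposite sides, while the $v$--$t$ path can meet neither the arc (the face interior contains no edges) nor the $u$--$w$ path --- is genuinely needed, and you execute it correctly. The only stylistic difference from Kuo's published proof is that Kuo phrases the argument as an explicit weight-preserving bijection, swapping the two matchings along the path ending at $u$ (which terminates at $v$ or $t$ by parity), whereas you fix the superposition multigraph $H$ and compare splitting counts via $a(H)=b(H)+c(H)$; the two formulations are equivalent, and your counting version has the mild advantage of handling all three products symmetrically and making the role of the excluded pairing completely transparent.
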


\begin{lem}\label{dragonlem1}
Let $a,b,c$ be non-negative integers so that $b\geq5$ and $c\geq 2$.  Assume in addition that $\overline{d}:=2b-a-2c+1\geq0$, $\overline{e}:=3b-2a-2c+1\geq 0$, and $a\geq c+\overline{d}+1$. Then $\M\left(DR^{(1)}_{a,b,c}\right)$ satisfies the recurrence (\ref{R1}).

Analogously,  $\M\left(DR^{(2)}_{a,b,c}\right)$ satisfies the recurrence (\ref{R1}), when $\widetilde{d}:=2b-a-2c-1\geq0$, $\widetilde{e}:=3b-2a-2c-1\geq 0$, and $a\geq c+\widetilde{d}+1$.
\end{lem}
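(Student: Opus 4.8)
The plan is to apply Kuo's Condensation Theorem (Theorem \ref{kuothm}) to the dual graph $G$ of the dragon region $DR^{(1)}_{a,b,c}$, choosing the four distinguished vertices $u,v,w,t$ so that each of the six matching numbers appearing in (\ref{kuoeq}) can be identified, after stripping away forced tiles, with the number of tilings of one of the six dragon regions occurring in (\ref{R1}). Since the plane is two-colored with squares black and hexagons and triangles white, $G$ is bipartite with color classes $V_1$ and $V_2$; the balance relation (\ref{constrain3}) guarantees $|V_1|=|V_2|$, and $G$ is manifestly planar. The identification I am aiming for is $G\leftrightarrow DR^{(1)}_{a,b,c}$, $G-\{u,v,w,t\}\leftrightarrow DR^{(1)}_{a-3,b-3,c-2}$, $G-\{u,v\}\leftrightarrow DR^{(1)}_{a-2,b-1,c}$, $G-\{w,t\}\leftrightarrow DR^{(1)}_{a-1,b-2,c-2}$, $G-\{t,u\}\leftrightarrow DR^{(1)}_{a-1,b-1,c-1}$, and $G-\{v,w\}\leftrightarrow DR^{(1)}_{a-2,b-2,c-1}$, which turns (\ref{kuoeq}) into exactly (\ref{R1}).

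First I would locate four of the six corners of the contour $\mathcal{C}^{(1)}(a,b,c)$ and designate the fundamental regions sitting at (or just inside) them as $u,v,w,t$ in cyclic order, choosing $u,w$ in $V_1$ and $v,t$ in $V_2$ so that the hypotheses of Theorem \ref{kuothm} are met. The constraints $b\geq5$ and $c\geq2$ are what ensure these four cells are distinct, carry the required colors, and lie on a common (outer) face of $G$. With this choice, $\M(G)=\bigstar(a,b,c)$ holds by definition of the dual graph, so no reduction is needed for $G$ itself.

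The heart of the argument is the five remaining identifications. For each of $G-\{u,v,w,t\}$, $G-\{u,v\}$, $G-\{w,t\}$, $G-\{t,u\}$, and $G-\{v,w\}$, I would show that deleting the indicated corner cells forces a unique chain of tiles along the affected sides of the region, and that after removing these forced tiles the residual graph is precisely the dragon graph of the corresponding shifted region listed above. Because each forced chain can be laid down in only one way, every forcing contributes a factor of $1$, so the five matching numbers equal $\M$ of the five reduced dragon regions. The hypothesis $a\geq c+\overline{d}+1$ (equivalently $a>c+\overline{d}$) is exactly what places us in the regime of (\ref{constrain2}) where the $\overline{f}$-side triangles and hexagons have been removed, so that the forced strips run along the $b$-, $\overline{e}$-, and $\overline{f}$-sides in the expected pattern; one also checks, using (\ref{constrain1})--(\ref{constrain3}), that $\overline{d}=2b-a-2c+1\geq0$ and $\overline{e}=3b-2a-2c+1\geq0$ continue to hold after subtracting each parameter shift, so that none of the six regions degenerates and each remains of type $DR^{(1)}$.

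The main obstacle I anticipate is the bookkeeping in this geometric identification: verifying that each corner deletion forces the claimed strip, that the strip removal lands on the precise boundary of the smaller contour $\mathcal{C}^{(1)}$ with the shifted parameters, and, most delicately, that the two asymmetric deletions $\{u,v\}$ and $\{w,t\}$ yield the triples $(a-2,b-1,c)$ and $(a-1,b-2,c-2)$ rather than neighboring ones. I would settle this by tracking the length of each of the six sides of the contour under each deletion and invoking (\ref{constrain1})--(\ref{constrain3}) to confirm the reduced contour is again a valid $\mathcal{C}^{(1)}$. Finally, the $DR^{(2)}$ statement follows by the identical argument applied to the dual graph of $DR^{(2)}_{a,b,c}$: the only changes are the replacement of $\overline{d},\overline{e},\overline{f}$ by $\widetilde{d},\widetilde{e},\widetilde{f}$ and of the balance offset $+1$ by $-1$ in the analogue of (\ref{constrain3}), together with the shifted non-degeneracy conditions $\widetilde{d}\geq0$, $\widetilde{e}\geq0$, and $a\geq c+\widetilde{d}+1$. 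Since these shifts affect neither the choice of the four corners nor the way the tiles are forced, the same five identifications yield (\ref{R1}) for $\bigstar(a,b,c)=\M(DR^{(2)}_{a,b,c})$.
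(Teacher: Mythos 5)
Your proposal follows the paper's proof essentially verbatim: the paper likewise applies Kuo's condensation theorem to the dual graph of $DR^{(i)}_{a,b,c}$ and, after stripping forced edges, identifies the five vertex-deleted graphs with exactly the five smaller dragon regions you list (your swap of the roles of $G-\{t,u\}$ and $G-\{v,w\}$ relative to the paper is harmless, since those two factors enter (\ref{kuoeq}) only as a product). The one point where you deviate is the vertex placement: the paper does not use four distinct corners of the contour, but instead places \emph{both} $u$ and $t$ at the west corner, $v$ at the north corner, and $w$ at the south corner, and it is this doubled-up placement that makes the forced strips yield the parameter shifts $(a-2,b-1,c)$, $(a-1,b-2,c-2)$, $(a-2,b-2,c-1)$, $(a-1,b-1,c-1)$ you claim — the side-length bookkeeping you propose would have forced you to this choice.
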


\begin{figure}\centering
\includegraphics[width=12cm]{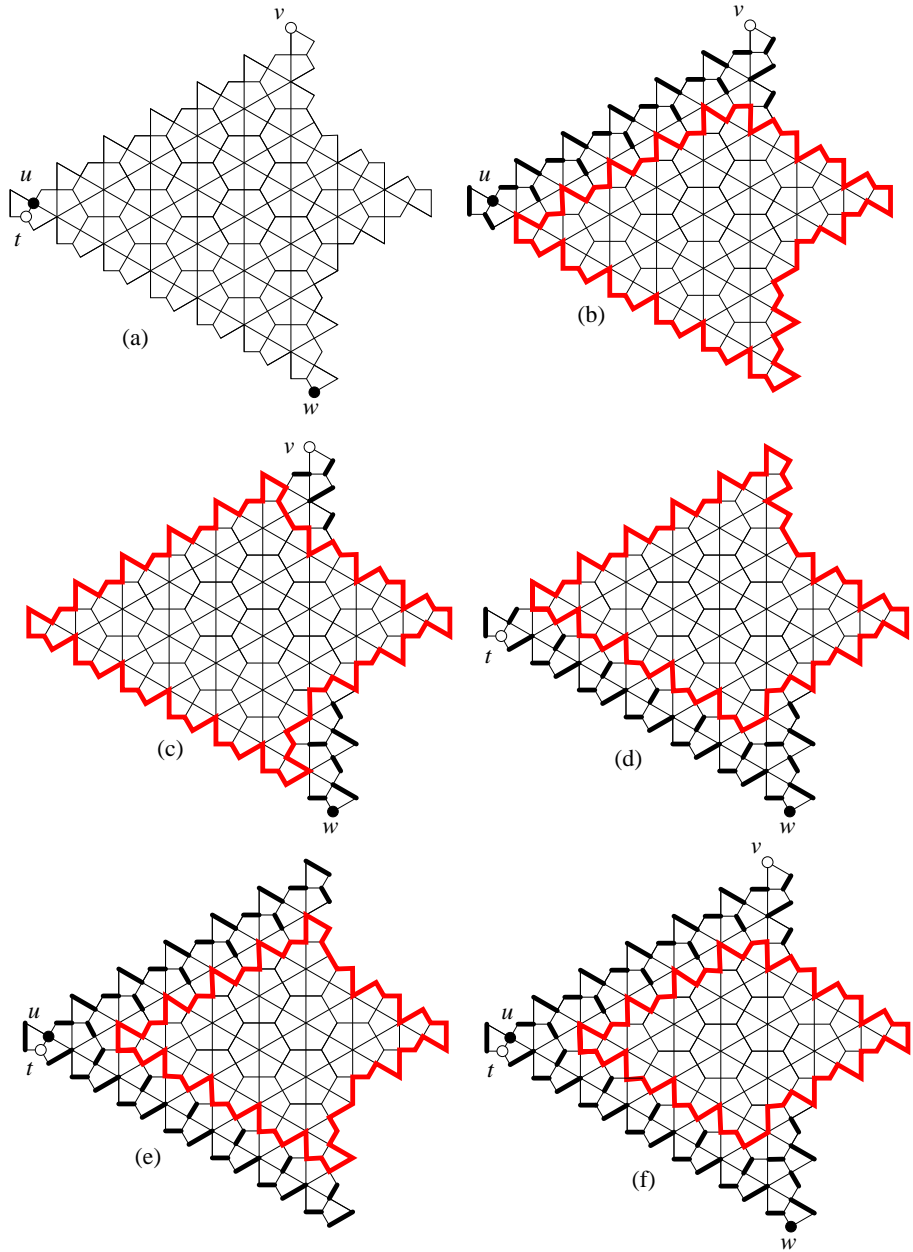}
\caption{Verifying $\M(DR^{(1)}_{a,b,c})$ satisfies the recurrence (\ref{R1}). The dark bold edges indicate the forced ones.}
\label{Dr6}
\end{figure}

\begin{proof}
To prove that $\M(DR^{(1)}_{a,b,c})$ satisfies the recurrence (\ref{R1}), we need to show that
\begin{equation}
\begin{split}
\M(DR^{(1)}_{a,b,c})\M(DR^{(1)}_{a-3,b-3,c-2})=\M(DR^{(1)}_{a-2,b-1,c})\M(DR^{(1)}_{a-1,b-2,c-2})\\+\M(DR^{(1)}_{a-1,b-1,c-1})\M(DR^{(1)}_{a-2,b-2,c-1}).
\end{split} \label{dragoneq1}
\end{equation}
We now apply the Kuo's  Condensation Theorem \ref{kuothm} to the dual graph $G$ of  the regions $DR^{(1)}_{a,b,c}$ with the four vertices $u,v,w,t$ chosen as in Figure \ref{Dr6}(a). More precisely, we chose $u$ and $t$ on the west corner, $v$ on the north corner, and $w$ on the south corner of the graph.  Each of the graphs $G-\{u,v,w,t\}$, $G-\{u,v\}$, $G-\{w,t\}$, $G-\{t,u\}$, and $G-\{v,w\}$ has some edges that are forced in any perfect matchings. Fortunately, by removing these edges, we get a new dragon graph having the same number of perfect matchings as the original graph. In particular, after removing forced edges from $G-\{u,v\}$, we get the dual graph of $DR^{(1)}_{a-2,b-1,c}$ (see Figure \ref{Dr6}(b); the forced edges are the dark bold ones; and the dual graph of $DR^{(1)}_{a-2,b-1,c}$ is illustrated by the one restricted by the light bold contour). Thus, we have
\begin{equation}\label{dragoneq2}
\M(G-\{u,v\})=\M(DR^{(1)}_{a-2,b-1,c}).
\end{equation}
Similarly, we get the following equalities:
\begin{equation}\label{dragoneq3}
\M(G-\{v,w\})=\M(DR^{(1)}_{a-1,b-1,c-1}) \text{ (see Figure \ref{Dr6}(c))},
\end{equation}
\begin{equation}\label{dragoneq4}
\M(G-\{w,t\})=\M(DR^{(1)}_{a-1,b-2,c-2})  \text{ (see Figure \ref{Dr6}(d))},
\end{equation}
\begin{equation}\label{dragoneq5}
\M(G-\{t,u\})=\M(DR^{(1)}_{a-2,b-2,c-1})  \text{ (see Figure \ref{Dr6}(e))},
\end{equation}
and
\begin{equation}\label{dragoneq6}
\M(G-\{u,v,w,t\})=\M(DR^{(1)}_{a-3,b-3,c-2})  \text{ (see Figure \ref{Dr6}(f))}.
\end{equation}

Substituting the above five equalities (\ref{dragoneq2})--(\ref{dragoneq6}) into the equation (\ref{kuoeq}) in Kuo's Theorem \ref{kuothm}, we obtain (\ref{dragoneq1}).

The statement for the region $DR^{(2)}_{a,b,c}$ can be obtained similarly.
\end{proof}

\begin{figure}\centering
\includegraphics[width=12cm]{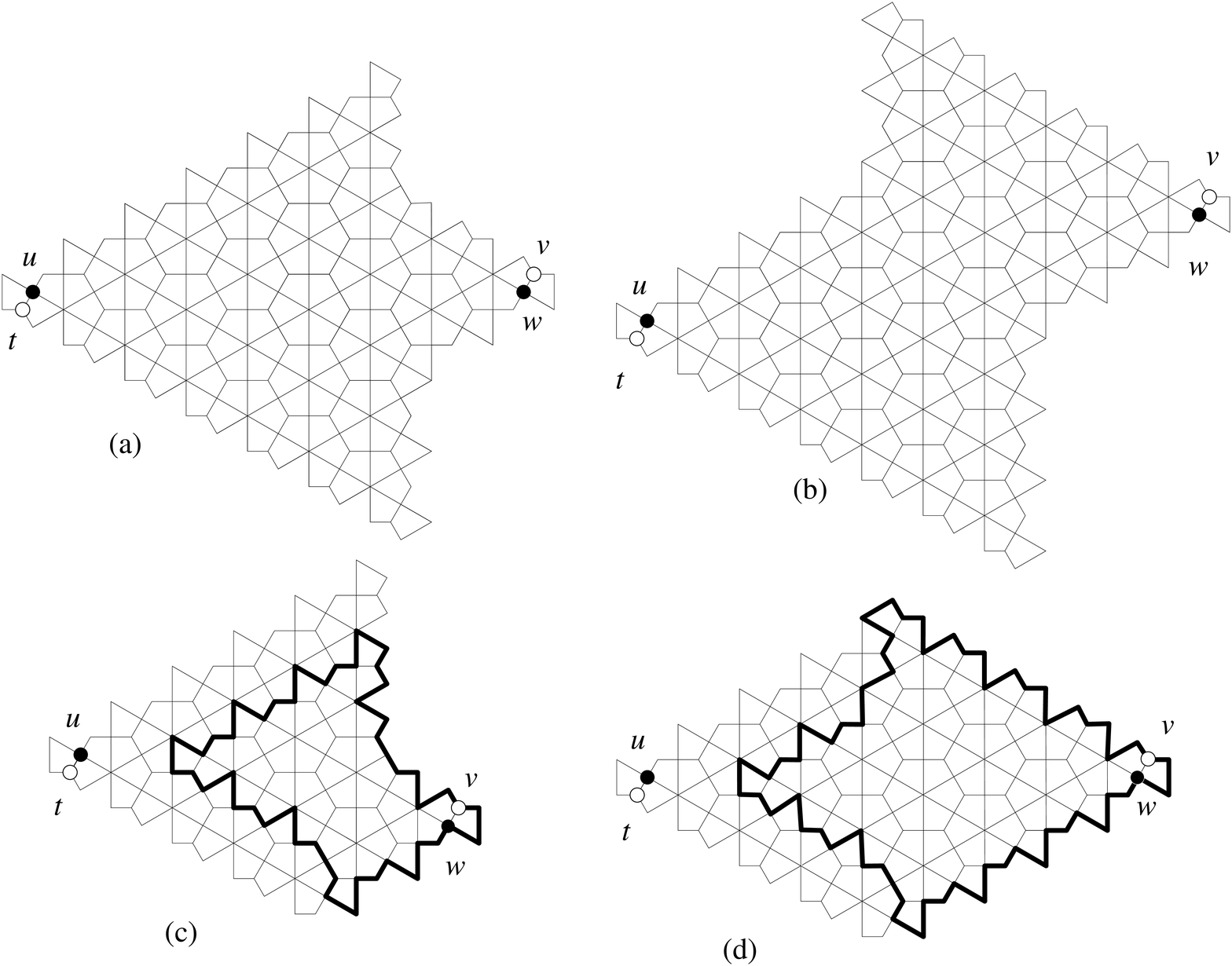}
\caption{The dual graphs of (a) $DR^{(1)}_{8,8,3}$, (b) $DR^{(1)}_{5,8,4}$, (c) $DR^{(1)}_{7,5,0}$ and (d) $DR^{(1)}_{6,5,0}$, together with the selection of the four vertices $u,v,w,t$.}
\label{DR5}
\end{figure}

\begin{lem}\label{dragonlem2}
Let $a,b,c$ be non-negative integers so that $a\geq2$ and $b\geq 4$.

(a) Assume that $c\geq1$.  Then $\M(DR^{(1)}_{a,b,c})$ satisfies the recurrence (\ref{R2}), when $\overline{d}:=2b-a-2c+1\geq 2$ and $\overline{e}:=3b-2a-2c+1\geq 2$. Analogously, $\M(DR^{(2)}_{a,b,c})$ satisfies the recurrence (\ref{R2}), when $\widetilde{d}:=2b-a-2c-1\geq 2$ and $\widetilde{e}:=3b-2a-2c-1\geq 2$.

(b) Assume that $c=0$.  Then the pair $\left(\M(DR^{(1)}_{a,b,c}),\right.$ $\left.\M(DR^{(2)}_{a,b,c})\right)$ satisfies the first equality in the recurrence  (\ref{R3}) for $(\bigstar,\lozenge)$, when $\overline{d}:=2b-a-2c+1\geq 2$ and $\overline{e}:=3b-2a-2c+1\geq 2$, i.e.
 \begin{equation}\label{dragobeq3-4}
     \begin{split}
      \M\left(DR^{(1)}_{a,b,0}\right)\M\left(DR^{(1)}_{a-2,b-2,0}\right)=
      \M\left(DR^{(1)}_{a-1,b-1,0}\right)^2\\+\M\left(DR^{(1)}_{a,b,1}\right)\M\left(DR^{(2)}_{3b-2a+1,2b-a+1,1}\right),
      \end{split}
\end{equation}
 and  it satisfies the second equality of (\ref{R3}), when $\widetilde{d}:=2b-a-2c-1\geq 2$ and $\widetilde{e}:=3b-2a-2c-1\geq 2$, i.e.
    \begin{equation}\label{dragoneq4-4}
     \begin{split}
      \M\left(DR^{(2)}_{a,b,0}\right)\M\left(DR^{(2)}_{a-2,b-2,0}\right)=\M\left(DR^{(2)}_{a-1,b-1,0}\right)^2\\+\M\left(DR^{(2)}_{a,b,1}\right)\M\left(DR^{(1)}_{3b-2a-1,2b-a-1,1}\right).
      \end{split}
   \end{equation}
\end{lem}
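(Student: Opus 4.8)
The plan is to prove Lemma~\ref{dragonlem2} by a \emph{second} application of Kuo's Condensation Theorem (Theorem~\ref{kuothm}) to the dual graph $G$ of $DR^{(1)}_{a,b,c}$, now choosing the four vertices $u,v,w,t$ as shown in Figure~\ref{DR5} instead of as in the proof of Lemma~\ref{dragonlem1}. The mechanism is the same as in that lemma: in each of the five graphs $G-\{u,v,w,t\}$, $G-\{u,v\}$, $G-\{w,t\}$, $G-\{t,u\}$, $G-\{v,w\}$ a family of edges is forced in every perfect matching, and after deleting the forced edges each graph becomes a dragon graph again. The parameter shifts on the right-hand sides of (\ref{R2}) and (\ref{R3}) tell me exactly which dragon graph to target in each case.

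For part~(a) I would place $u,t,v,w$ so that the forced-edge reductions read
\[
\M(G-\{u,v,w,t\})=\M(DR^{(1)}_{a-2,b-2,c}),\quad \M(G-\{u,v\})=\M(G-\{w,t\})=\M(DR^{(1)}_{a-1,b-1,c}),
\]
\[
\M(G-\{t,u\})=\M(DR^{(1)}_{a,b,c+1}),\quad \M(G-\{v,w\})=\M(DR^{(1)}_{a-2,b-2,c-1}).
\]
Feeding these into (\ref{kuoeq}) reproduces (\ref{R2}) verbatim. The hypothesis $c\geq1$ is precisely what makes the third parameter of the last region non-negative, while $\overline{d}\geq2$ and $\overline{e}\geq2$ keep all five reduced regions non-degenerate. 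Since the $\overline{f}$-side changes shape according to whether $a>c+\overline{d}$ or $a\leq c+\overline{d}$, I expect to split the forced-edge analysis into the two cases drawn in Figures~\ref{DR5}(a) and \ref{DR5}(b); the type~(2) statement follows from the identical picture with $\overline{d},\overline{e},\overline{f}$ replaced by $\widetilde{d},\widetilde{e},\widetilde{f}$.

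The heart of the lemma is part~(b): when $c=0$ the third parameter of $DR^{(1)}_{a-2,b-2,c-1}$ drops to $-1$, so this region leaves the type~(1) family. The crucial observation is that the graph $G-\{v,w\}$, after its forced edges are deleted, is still a bona fide dragon graph, but of type~(2). Reading the side lengths off its contour gives, in the formal type~(1) labelling, $(a-2,\,b-2,\,-1,\,2b-a+1,\,3b-2a+1)$, and these are exactly the reverse of the side lengths $(3b-2a+1,\,2b-a+1,\,1,\,b-2,\,a-2)$ of $DR^{(2)}_{3b-2a+1,\,2b-a+1,\,1}$; in other words a reflection of the dragon lattice carries one contour onto the other. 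Pinning down this identification---by exhibiting the reflection explicitly and checking that it matches fundamental regions of equal colour, so that $\M(G-\{v,w\})=\M(DR^{(2)}_{3b-2a+1,2b-a+1,1})$---is the step I expect to be the main obstacle, since it is the only place where the two families genuinely meet. Once it is in hand, substitution into (\ref{kuoeq}) yields the first equality of (\ref{R3}), and the second equality follows symmetrically by taking $G$ to be the dual graph of $DR^{(2)}_{a,b,0}$, whose crossover region is the type~(1) graph $DR^{(1)}_{3b-2a-1,2b-a-1,1}$, as depicted in Figures~\ref{DR5}(c) and \ref{DR5}(d).
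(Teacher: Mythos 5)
Your proposal is correct and follows essentially the same route as the paper's proof: apply Kuo's condensation (Theorem~\ref{kuothm}) to the dual graph of $DR^{(1)}_{a,b,c}$ with the vertex choices of Figure~\ref{DR5}, split into the cases $a>c+\overline{d}$ and $a\leq c+\overline{d}$, identify the five reduced graphs after forced-edge removal as dragon graphs (your assignment of the pairs $\{t,u\}$ and $\{v,w\}$ is the paper's with those two pairs swapped, which is immaterial since Kuo's identity only uses their product), and, for $c=0$, replace the nonexistent region $DR^{(1)}_{a-2,b-2,-1}$ by the vertically reflected type-$(2)$ graph $DR^{(2)}_{3b-2a+1,2b-a+1,1}$ --- exactly the paper's crossover step, with the second equality of (\ref{R3}) obtained symmetrically from $DR^{(2)}_{a,b,0}$.
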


\begin{proof}
(a) Similar to Lemma \ref{dragonlem1}, we apply Kuo's Theorem \ref{kuothm} to the dual graph $G$ of $DR^{(1)}_{a,b,c}$ with the four vertices $u,v,w,t$ chosen as in Figures \ref{DR5}(a) and (b), for the cases $a>c+d$ and $a\leq c+d$, respectively. By considering forced edges, we get in both cases that
\begin{equation}\label{dragoneq7}
\M(G-\{u,v\})=\M\left(DR^{(1)}_{a-1,b-1,c}\right),
\end{equation}
\begin{equation}\label{dragoneq8}
\M(G-\{v,w\})=\M\left(DR^{(1)}_{a,b,c+1}\right),
\end{equation}
\begin{equation}\label{dragoneq9}
\M(G-\{w,t\})=\M\left(DR^{(1)}_{a-1,b-1,c}\right),
\end{equation}
\begin{equation}\label{dragoneq10}
\M(G-\{t,u\})=\M\left(DR^{(1)}_{a-2,b-2,c-1}\right),
\end{equation}
and
\begin{equation}\label{dragoneq11}
\M(G-\{u,v,w,t\})=\M\left(DR^{(1)}_{a-2,b-2,c}\right).
\end{equation}
Again, substituting (\ref{dragoneq7})--(\ref{dragoneq11}) into the equation (\ref{kuoeq}) in Kuo's Theorem \ref{kuothm}, we deduce
\begin{align}
\M\left(DR^{(1)}_{a,b,c}\right)\M\left(DR^{(1)}_{a-2,b-2,c}\right)=&\M\left(DR^{(1)}_{a-1,b-1,c}\right)^2\notag\\
&+\M\left(DR^{(1)}_{a,b,c+1}\right)\M\left(DR^{(1)}_{a-2,b-2,c-1}\right),
\end{align}
which implies that $\M\left(DR^{(1)}_{a,b,c}\right)$ satisfies the recurrence (\ref{R2}). Similarly, we can prove that the number tilings of $DR^{(2)}_{a,b,c}$ satisfies  (\ref{R2}).

\medskip

(b) We present here the proof of (\ref{dragobeq3-4}),  and (\ref{dragoneq4-4}) can be treated in the same way.

Similar to part (a), we apply the Kuo's Theorem to the dual graph $G$ of $DR^{(1)}_{a,b,0}$, as in Figures \ref{DR5}(c) and (d) for the cases $a>c+\overline{d}$ and $a\leq c+\overline{d}$, respectively. We still get the equalities (\ref{dragoneq7}), (\ref{dragoneq8}), (\ref{dragoneq9}), and (\ref{dragoneq11}) as in part (a). However, after removing forced edges from the graph $G-\{t,u\}$, we do \textit{not} get the dual graph of the region $DR^{(1)}_{a-2,b-2,c-1}$ any more (this region does \textit{not} even exist when $c=0$). By reflecting the resulting graph about a vertical line, we get the dual graph of $DR^{(2)}_{\overline{e},\overline{d},1}$ (see the graphs restricted by the bold contours in Figures \ref{DR5}(c) and (d)). Thus, (\ref{dragobeq3-4}) follows from Kuo's Theorem \ref{kuothm}. 
\end{proof}

\begin{figure}\centering
\includegraphics[width=12cm]{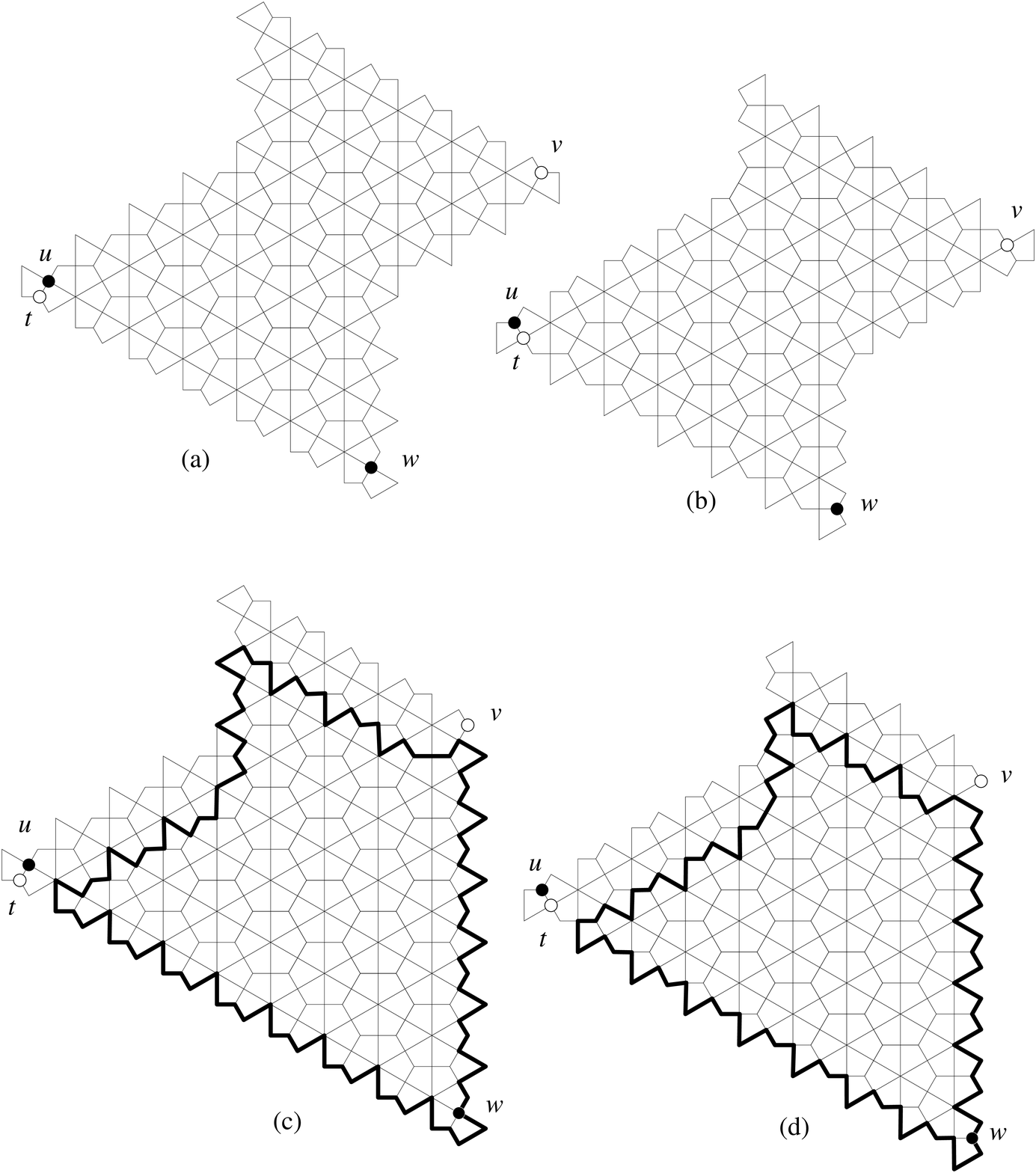}
\caption{The dual graphs of (a) $DR^{(1)}_{5,8,4}$, (b) $DR^{(2)}_{5,8,3}$, (c) $DR^{(1)}_{5,10,8}$ and (d) $DR^{(2)}_{5,10,7}$, together with the selection of the four vertices $u,v,w,t$.}
\label{DR3}
\end{figure}

\begin{lem}\label{dragonlem3}
Assume that $a,b,c$ are non-negative integers so that $a\geq 2$, $b\geq 5$ and $c\geq2$. Denote by $\overline{d}:=2b-a-2c+1$, $\overline{e}:=3b-2a-2c+1$, $\widetilde{d}:=2b-a-2c-1$, and $\widetilde{e}:=3b-2a-2c-1$ as usual.

(a) Assume in addition that $\overline{d}\geq1$, $\overline{e}\geq0$, and $a\leq c+\overline{d}$, then $\M\left(DR^{(1)}_{a,b,c}\right)$ satisfies the recurrence $(\ref{R4})$.  The number of tilings $\M\left(DR^{(2)}_{a,b,c}\right)$ also satisfies the recurrence $(\ref{R4})$, when $\widetilde{d}\geq1$, $\widetilde{e}\geq0$, and $a\leq c+\widetilde{d}$.

(b) If $\overline{d}=0$, $\overline{e}\geq0$, and $a\leq c+\overline{d}$, then  $\M\left(DR^{(1)}_{a,b,c}\right)$ satisfies the recurrence $(\ref{R5})$. Analogously,
  $\M\left(DR^{(2)}_{a,b,c}\right)$ also satisfies the recurrence $(\ref{R5})$, when $\widetilde{d}=0$, $\widetilde{e}\geq0$, and $a\leq c+\widetilde{d}$.
\end{lem}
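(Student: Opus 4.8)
The plan is to reuse the template of Lemmas~\ref{dragonlem1} and~\ref{dragonlem2}: apply Kuo's Condensation Theorem (Theorem~\ref{kuothm}) to the dual graph $G$ of $DR^{(1)}_{a,b,c}$, choosing four vertices $u,v,w,t$ on a common face as indicated in Figure~\ref{DR3}, and then identify each of the five reduced graphs---after its forced edges have been deleted---with the dual graph of a smaller dragon region. All the hypotheses place us in the regime $a\le c+\overline{d}$, so that the endpoint $O$ of the contour lies (weakly) above $S$; the $DR^{(2)}$ statements are obtained by the identical argument applied to the dual graph of $DR^{(2)}_{a,b,c}$, with $\overline{d},\overline{e},\overline{f}$ replaced throughout by $\widetilde{d},\widetilde{e},\widetilde{f}$.

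For part~(a), with $u,v,w,t$ chosen as in Figure~\ref{DR3}(a), I would track forced edges exactly as in Lemma~\ref{dragonlem1} to obtain the identifications $\M(G-\{u,v\})=\M(DR^{(1)}_{a-1,b-1,c})$, $\M(G-\{w,t\})=\M(DR^{(1)}_{a-1,b-2,c-2})$, $\M(G-\{t,u\})=\M(DR^{(1)}_{a-2,b-2,c-1})$, $\M(G-\{v,w\})=\M(DR^{(1)}_{a,b-1,c-1})$, and $\M(G-\{u,v,w,t\})=\M(DR^{(1)}_{a-2,b-3,c-2})$. Substituting these into the Kuo identity~(\ref{kuoeq}) yields exactly~(\ref{R4}); the $DR^{(2)}$ assertion (via Figure~\ref{DR3}(b)) follows in the same way under the stated hypotheses on $\widetilde{d},\widetilde{e}$.

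For part~(b) the new feature is the degeneration at $\overline{d}=0$. Now the putative region $DR^{(1)}_{a-1,b-1,c}$ has $\overline{d}$-parameter $2(b-1)-(a-1)-2c+1=\overline{d}-1=-1<0$, so it is not a genuine dragon region and the identification of $G-\{u,v\}$ from part~(a) is no longer valid. Instead, with $u,v,w,t$ chosen as in Figure~\ref{DR3}(c), I would delete the forced edges of $G-\{u,v\}$ and then apply the reflection of the dragon lattice that interchanges the $a$- and $c$-directions of the contour (the reflection fixing the $b$- and $\overline{e}$-directions); the reflected graph is the dual graph of $DR^{(1)}_{c,b-1,a-1}$, and since a reflection preserves the number of perfect matchings we get $\M(G-\{u,v\})=\M(DR^{(1)}_{c,b-1,a-1})$. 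The other four identifications are exactly as in part~(a), so substitution into~(\ref{kuoeq}) now produces~(\ref{R5}), whose first right-hand factor is precisely $\bigstar(c,b-1,a-1)$. The $DR^{(2)}$ case is handled identically (Figure~\ref{DR3}(d)), the reflected graph being the dual graph of $DR^{(2)}_{c,b-1,a-1}$.

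The main obstacle is the geometric bookkeeping behind the five identifications: one must verify that in each reduced graph the forced edges are exactly those drawn, and that their removal leaves precisely the claimed smaller dragon graph, with the correct surviving boundary hexagons and triangles. This is where the inequalities $b\ge5$, $c\ge2$, $\overline{e}\ge0$ and $a\le c+\overline{d}$ are used, since they govern the local shape of the contour at each of the four corners. The genuinely new point relative to Lemma~\ref{dragonlem2} is recognizing that at $\overline{d}=0$ the part~(a) identification of $G-\{u,v\}$ collapses and must be replaced by a reflection; one must also check that, because this reflection fixes the $b$- and $\overline{e}$-sides, it keeps the region within the same family (so that~(\ref{R5}) is expressed entirely in $\bigstar$, in contrast to the family-switching vertical reflection of Lemma~\ref{dragonlem2}).
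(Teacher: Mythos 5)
Your proposal reproduces the paper's proof essentially step for step: part (a) applies Kuo's condensation with the Figure~\ref{DR3}(a),(b) vertex choices and the same five forced-edge identifications leading to (\ref{R4}), and part (b) makes exactly the paper's move of observing that $DR^{(i)}_{a-1,b-1,c}$ degenerates when the $d$-parameter is $0$ and replacing that identification by $\M(G-\{u,v\})=\M\left(DR^{(i)}_{c,b-1,a-1}\right)$ via a reflection of the forced-edge-reduced graph, the other four identifications being retained, which yields (\ref{R5}). The only discrepancy is the description of the reflection axis---the paper reflects ``about a vertical line'' (in the orientation of its figures) whereas you name the reflection exchanging the $a$- and $c$-directions---but since both descriptions identify the reduced graph with the dual graph of the same region $DR^{(i)}_{c,b-1,a-1}$, and any such isomorphism preserves the number of perfect matchings, this difference is immaterial to the argument.
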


\begin{proof}
(a) To prove that  $\M\left(DR^{(i)}_{a,b,c}\right)$ satisfies (\ref{R4}) (with the corresponding constraints), for $i=1,2$,  we apply Kuo's  Theorem \ref{kuothm} to the dual graph $G$ of the region $DR^{(i)}_{a,b,c}$ with the four vertices $u,v,w,t$ chosen as in Figures \ref{DR3}(a) and  (b) (for the case $i=1$ and $2$, respectively). We need to show that
\begin{equation}
  \begin{split}
   \M\left(DR^{(i)}_{a,b,c}\right)\M\left(DR^{(i)}_{a-2,b-3,c-2}\right)=\M\left(DR^{(i)}_{a-1,b-1,c}\right)\M\left(DR^{(i)}_{a-1,b-2,c-2}\right)\\+\M\left(DR^{(i)}_{a-2,b-2,c-1}\right)
  \M\left(DR^{(i)}_{a,b-1,c-1}\right).
 \end{split}\label{dragoneq12}
\end{equation}

Similar to  the proofs of Lemmas \ref{dragonlem1} and \ref{dragonlem2}, by  removing edges forced by the removal of the vertices $u,v,w,t$, we get new dragon graphs. To precise, we have
\begin{equation}\label{dragoneq13}
\M(G-\{u,v\})=\M\left(DR^{(i)}_{a-1,b-1,c}\right),
\end{equation}
\begin{equation}\label{dragoneq14}
\M(G-\{v,w\})=\M\left(DR^{(i)}_{a,b-1,c-1}\right),
\end{equation}
\begin{equation}\label{dragoneq15}
\M(G-\{w,t\})=\M\left(DR^{(i)}_{a-1,b-2,c-2}\right),
\end{equation}
\begin{equation}\label{dragoneq16}
\M(G-\{t,u\})=\M\left(DR^{(i)}_{a-2,b-2,c-1}\right),
\end{equation}
and
\begin{equation}\label{dragoneq17}
\M(G-\{u,v,w,t\})=\M\left(DR^{(i)}_{a-2,b-3,c-2}\right).
\end{equation}
Then the equality (\ref{dragoneq12}) follows also from Kuo's Theorem \ref{kuothm}.

(b) First, we note that if  $\overline{d}:=2b-a-2c+1=0$, then $2(b-1)-(a-1)-2c+1=-1$. This means that the region $DR^{(1)}_{a-1,b-1,c}$ does \emph{not} exist when $\overline{d}=0$. Similarly, the region $DR^{(2)}_{a-1,b-1,c}$ does not exist when $\widetilde{d}=0$. In particular, the equality (\ref{dragoneq13}) in part (a) is \textit{not} true anymore when $\overline{d}=0$ or $\widetilde{d}=0$.

 This part can be treated similarly to part (a). We still apply the Kuo's Theorem to the dual graph $G$ of $DR^{(i)}_{a,b,c}$ with the four vertices $u,v,w,t$ selected as in Figures \ref{DR3}(c) and (d). We also obtain the five equalities (\ref{dragoneq14})--(\ref{dragoneq17}) as in part (a). However, after removing forced edges from the graph $G-\{u,v\}$, we reflect the resulting graph about a vertical line and get the dual graph of $DR^{(i)}_{c,b-1,a-1}$ (see the graphs restricted by the bold contours in Figures \ref{DR3}(c) and (d)). Thus,  we have
 \begin{equation}\label{dragoneq18}
\M(G-\{u,v\})=\M\left(DR^{(i)}_{c,b-1,a-1}\right)
\end{equation}
 instead of (\ref{dragoneq13}) in part (a). By substituting (\ref{dragoneq14})--(\ref{dragoneq18}) into (\ref{kuoeq}), we get the statement in part (b). 
\end{proof}

Denote respectively by $\Phi(a,b,c)$ and $\Psi(a,b,c)$ the expressions on the right-hand sides of the equalities (\ref{dragoneq1}) and (\ref{dragoneq2}), i.e.
\begin{equation}
\Phi(a,b,c):=2^{(b-c+1)(2b-a-c)+(a-b)^2}3^{\frac{(a-b+c)(a-b+c-1)}{2}}
\end{equation}
and
\begin{equation}
\Psi(a,b,c):=2^{(b-c-1)(2b-a-c)+(a-b)^2}3^{\frac{(a-b+c)(a-b+c+1)}{2}}.
\end{equation}
It is straightforward to verify the following result.
\begin{lem}\label{lemnew}
For any integers $a,b,c$ the functions $\Phi(a,b,c)$ and $\Psi(a,b,c)$ satisfy all the recurrences (\ref{R1})--(\ref{R5}). In particular,   we have
\begin{equation}
\begin{split}
\Phi(a,b,c)\Phi(a-3,b-3,c-2)=\Phi(a-2,b-1,c)\Phi(a-1,b-2,c-2)\\+\Phi(a-1,b-1,c-1)\Phi(a-2,b-2,c-1),
\end{split} \label{R1n}
\end{equation}
\begin{equation}
\begin{split}
\Psi(a,b,c)\Phi(a-3,b-3,c-2)=\Psi(a-2,b-1,c)\Psi(a-1,b-2,c-2)\\+\Psi(a-1,b-1,c-1)\Psi(a-2,b-2,c-1),
\end{split} \label{R1m}
\end{equation}
\begin{equation}
\Phi(a,b,c)\Phi(a-2,b-2,c)=\Phi(a-1,b-1,c)^2+\Phi(a,b,c+1)\Phi(a-2,b-2,c-1),
\label{R2n}
\end{equation}
\begin{equation}
\Psi(a,b,c)\Psi(a-2,b-2,c)=\Psi(a-1,b-1,c)^2+\Psi(a,b,c+1)\Psi(a-2,b-2,c-1),
\label{R2m}
\end{equation}
\begin{equation}
\begin{cases}
\begin{split}
 \Phi(a,b,0)\Phi(a-2,b-2,0)=\Phi(a-1,b-1,0)^2\\+\Phi(a,b,1)\Psi(3b-2a+1,2b-a+1,1);\end{split}\\
 \begin{split}
\Psi(a,b,0)\Psi(a-2,b-2,0)=\Psi(a-1,b-1,0)^2\\+\Psi(a,b,1)\Phi(3b-2a-1,2b-a-1,1),
\end{split}
 \end{cases}
\label{R3n}
\end{equation}
\begin{equation}
  \begin{split}
        \Phi(a,b,c)\Phi(a-2,b-3,c-2)=\Phi(a-1,b-1,c)\Phi(a-1,b-2,c-2)\\+\Phi(a-2,b-2,c-1)\Phi(a,b-1,c-1),
 \end{split}\label{R4n}
\end{equation}
\begin{equation}
  \begin{split}
        \Psi(a,b,c)\Psi(a-2,b-3,c-2)=\Psi(a-1,b-1,c)\Psi(a-1,b-2,c-2)\\+\Psi(a-2,b-2,c-1)\Psi(a,b-1,c-1),
 \end{split}\label{R4m}
\end{equation}
\begin{equation}
    \begin{split}
      \Phi(a,b,c)\Phi(a-2,b-3,c-2)=\Phi(c,b-1,a-1)\Phi(a-1,b-2,c-2)\\+\Phi(a-2,b-2,c-1)\Phi(a,b-1,c-1),
    \end{split}
    \label{R5n}
\end{equation}
\begin{equation}
    \begin{split}
      \Psi(a,b,c)\Psi(a-2,b-3,c-2)=\Psi(c,b-1,a-1)\Psi(a-1,b-2,c-2)\\+\Psi(a-2,b-2,c-1)\Psi(a,b-1,c-1).
    \end{split}
    \label{R5m}
\end{equation}
\end{lem}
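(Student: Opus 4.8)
The plan is to turn each of the ten identities into a comparison of integer exponents. Since $\Phi$ and $\Psi$ are pure monomials, write
\[
\Phi(a,b,c)=2^{\alpha(a,b,c)}3^{\beta(a,b,c)},\qquad \Psi(a,b,c)=2^{\gamma(a,b,c)}3^{\delta(a,b,c)},
\]
where $\alpha=(b-c+1)(2b-a-c)+(a-b)^2$, $\beta=\binom{a-b+c}{2}$, $\gamma=(b-c-1)(2b-a-c)+(a-b)^2$, and $\delta=\binom{a-b+c+1}{2}$; note the handy relations $\gamma=\alpha-2(2b-a-c)$ and $\delta=\beta+(a-b+c)$. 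Every identity in the statement then has the shape $2^{p_0}3^{q_0}=2^{p_1}3^{q_1}+2^{p_2}3^{q_2}$ with all six exponents explicit quadratics in $a,b,c$, so the whole lemma reduces to checking one numerical relation among these exponents in each case. The organizing idea is to factor out of the two right-hand summands their greatest common monomial divisor $D=2^{\min(p_1,p_2)}3^{\min(q_1,q_2)}$; I will show that the two resulting cofactors are always $\{1,1\}$, $\{1,2\}$, or $\{1,3\}$, whose sums are the prime powers $2$, $3$, $4$, and that the left-hand side equals $D$ times this sum.

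First I would settle the powers of $3$. Since $\beta$ and $\delta$ depend only on $s:=a-b+c$ (through $\binom{s}{2}$ and $\binom{s+1}{2}$), and every argument shift in (\ref{R1n})--(\ref{R5m}) moves $s$ by a fixed small integer, each comparison of $3$-exponents reduces to a one-variable binomial identity, e.g.\ $\binom{s}{2}+\binom{s-2}{2}=2\binom{s-1}{2}+1$ for (\ref{R1n}) and $\binom{s+1}{2}+\binom{s-1}{2}=2\binom{s}{2}+1$ for (\ref{R2n}), the rest being variants. These reveal exactly which cofactor case occurs: the two summands have equal $3$-exponents in (\ref{R1n}), (\ref{R1m}), (\ref{R4n}), (\ref{R4m}), (\ref{R5n}), (\ref{R5m}), and $3$-exponents differing by $1$ in (\ref{R2n}), (\ref{R2m}) and in both lines of (\ref{R3n}). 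For the mixed recurrence (\ref{R3n}) I would first note that the reflected argument $(3b-2a\pm1,\,2b-a\pm1,\,1)$ has $s$-value $1-s$, so its $3$-exponent is again $\binom{s-1}{2}$ and the same bookkeeping applies.

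Next I would settle the powers of $2$. Although $\alpha$ and $\gamma$ are quadratic, each argument differs from $(a,b,c)$ by a fixed integer vector, so after summing the two factors of each product the quadratic and linear parts cancel and every relevant $2$-exponent difference is a constant. Combined with the previous step, each identity falls into exactly one cofactor case and reduces to one of the elementary facts
\[
2^{m}3^{n}+2^{m}3^{n}=2^{m+1}3^{n},\qquad 2^{m}3^{n}+2^{m+1}3^{n}=2^{m}3^{n+1},\qquad 2^{m}3^{n}+2^{m}3^{n+1}=2^{m+2}3^{n};
\]
that is, (\ref{R4n})/(\ref{R4m})/(\ref{R5n})/(\ref{R5m}) use $1+1=2$, (\ref{R1n})/(\ref{R1m}) use $1+2=3$, and (\ref{R2n})/(\ref{R2m}) together with both lines of (\ref{R3n}) use $1+3=4$. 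Reading off $p_0,q_0$ in each case then matches the single monomial on the left. One shortcut removes two cases entirely: the identities $\Phi(c,b-1,a-1)=\Phi(a-1,b-1,c)$ and $\Psi(c,b-1,a-1)=\Psi(a-1,b-1,c)$ hold for all integers, so (\ref{R5n}) and (\ref{R5m}) coincide termwise with (\ref{R4n}) and (\ref{R4m}) and need no separate computation.

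The computations are routine and the genuine difficulty is purely organizational. The hard part will be the mixed recurrence (\ref{R3n}), where a factor of $\Phi$ multiplies a factor of $\Psi$ carrying the affinely substituted, reflected arguments: there one cannot stay inside a single exponent formula and must pass between $\alpha,\beta$ and $\gamma,\delta$, for which the relations $\gamma=\alpha-2(2b-a-c)$ and $\delta=\beta+s$ are the cleanest device. Everything else is the sign-correct expansion of the ten quadratic exponent comparisons, which presents no conceptual obstacle but must be carried out without algebraic slips.
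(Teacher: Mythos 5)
The paper offers no proof of this lemma at all beyond the sentence ``It is straightforward to verify the following result,'' so there is nothing to compare against except direct computation, and your proposal is a correct and well-organized way to carry that computation out. Your exponent functions $\alpha,\beta,\gamma,\delta$ are transcribed correctly; the relations $\gamma=\alpha-2(2b-a-c)$ and $\delta=\beta+(a-b+c)$ hold; the cofactor patterns you identify ($1+1=2$ for (\ref{R4n})--(\ref{R5m}), $1+2=3$ for (\ref{R1n}) and (\ref{R1m}), $1+3=4$ for (\ref{R2n}), (\ref{R2m}) and both lines of (\ref{R3n})) are the right ones; and your shortcut is a genuine identity: $\Phi(c,b-1,a-1)=\Phi(a-1,b-1,c)$ and $\Psi(c,b-1,a-1)=\Psi(a-1,b-1,c)$ hold because the two triples have the same value of $s=a-b+c$ (so equal $3$-exponents) and the difference of their $2$-exponents factors as $(a-c-1)\left[(2b-a-c-1)+(a+c-2b+1)\right]=0$. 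I also confirmed your claim that the $2$-exponent differences are constants (e.g.\ $p_1-p_2=1$ and $p_0=p_2$ for (\ref{R1n}); $p_1=p_2$ and $p_0=p_1+2$ for (\ref{R2n}) and (\ref{R3n})), which is the structural fact your plan rests on.

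One point you must handle explicitly rather than silently: equation (\ref{R1m}) \emph{as displayed is false}, and your own method detects this. As printed its left side is $\Psi(a,b,c)\Phi(a-3,b-3,c-2)$. Take $(a,b,c)=(5,5,2)$: then $\Psi(5,5,2)=2^6 3^3$ and $\Phi(2,2,0)=2^6$, so the printed left side is $2^{12}3^3$, while the right side is $\Psi(3,4,2)\Psi(4,3,0)+\Psi(4,4,1)\Psi(3,3,1)=2^9 3^2+2^8 3^2=2^8 3^3$. The prose of the lemma (``$\Phi$ and $\Psi$ satisfy all the recurrences (\ref{R1})--(\ref{R5})'') shows the intended left factor is $\Psi(a-3,b-3,c-2)$; since $\Psi(2,2,0)=2^2$, the corrected identity does hold in this example, and your $1+2=3$ bookkeeping closes up exactly for that all-$\Psi$ version. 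In other words, your claim that ``reading off $p_0,q_0$ matches the single monomial on the left'' is true only after the typo is fixed, so your write-up should state the correction and prove the corrected identity. A second, harmless slip: for the second line of (\ref{R3n}) the reflected argument $(3b-2a-1,2b-a-1,1)$ feeds into $\Phi$, whose $3$-exponent is $\binom{1-s}{2}=\binom{s}{2}$, not $\binom{s-1}{2}$ as you state (that value is correct only for the first line, where the reflected argument feeds into $\Psi$); the differing-by-one pattern you need survives either way. With these two emendations, and the routine quadratic expansions you acknowledge must still be written out, the plan is sound.
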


\section{Proof of Theorem \ref{dragonthm}}

We show that
\begin{equation}\label{dram1}
\M\left(DR^{(1)}_{a,b,c}\right)=\Phi(a,b,c),
\end{equation}
for $b\geq 2,$ $\overline{d}\geq0,\overline{e}\geq 0$,
and that
\begin{equation}\label{dram2}
\M\left(DR^{(2)}_{a,b,c}\right)=\Psi(a,b,c),
\end{equation}
for $b\geq 2$, $\widetilde{d}\geq0,$ $\widetilde{e}\geq 0$,
 by induction on the perimeters of the contours of the regions. In this proof, we always denote by $\overline{d}:=2b-a-2c+1$, $\overline{e}:=3b-2a-2c+1$, $\overline{f}:=|2b-2a-c+1|$, $\widetilde{d}:=2b-a-2c-1$, $\widetilde{e}:=3b-2a-2c-1$,  and $\widetilde{f}:=|2b-2a-c-1|$ as usual.

Denote by $\mathcal{P}^{(i)}(a,b,c)$ the perimeter of the contour $\mathcal{C}^{(i)}(a,b,c)$, for $i=1,2$. One readily sees that $\mathcal{P}^{(1)}(a,b,c)=a+b+c+\overline{d}+\overline{e}+\overline{f}=4b-2c+1$ if $a>c+\overline{d}$, and $8b-4a-4c+3$ otherwise. Similarly,  $\mathcal{P}^{(2)}(a,b,c)$ equals $4b-2c-1$ if $a>c+\widetilde{d}$, and $8b-4a-4c-3$ otherwise. In particular, the two parameters are both odd.

\medskip

The base cases are all $DR^{(1)}$-regions with the triple $(a,b,c)$ satisfying \emph{at least one of} the following conditions:
\begin{enumerate}
\item[\quad \quad(i)] $\mathcal{P}^{(1)}(a,b,c)\leq 15$
\item[(ii)] $b \leq 4$
\item[(iii)] $c+\overline{d} \leq 2$;
\end{enumerate}
and all $DR^{(2)}$-regions having the triple $(a,b,c)$ satisfying at least one of the following conditions:
\begin{enumerate}
\item[(iv)] $\mathcal{P}^{(2)}(a,b,c)\leq 15$
\item[(v)] $b \leq 3$
\item[(vi)] $c+\widetilde{d} \leq 2$,
\end{enumerate}

It is easy to see that each of the conditions (i), (ii) and (iii) implies some small upper bounds for all $a,b,c$. Indeed, if (i) holds, we get from the triangle inequality that $2a,2b,2c\leq \mathcal{P}^{(1)}(a,b,c)\leq 15$. Thus, we get $a,b,c\leq 7$. Similarly, if $b\leq 4$, then $\overline{d}=2b-a-2c+1\geq0$, this implies $c \leq 4$. By the same reason, $ a\leq 2b+1 \leq 9$. Finally, if $c+\overline{d}\leq 2$, then $2b\leq a+2c+1\leq a+5\leq (b+\overline{d})+5\leq b+7$. Thus $b\leq 7$, so $a\leq b+ \overline{d}\leq 9$.

 Taking into account also the inequalities $b\geq 2$, $\overline{d}=2b-a-2c+1\geq 0$ and $\overline{e}=3b-2a-2c+1\geq 0$, we get that there are 53 $DR^{(1)}$-regions in the base cases. Similarly, there are $28$ $DR^{(2)}$-regions with the triple $(a,b,c)$ satisfying at least one of the conditions (iv), (v), and (vi). For each of the dragon regions in the base cases, (\ref{dram1}) and (\ref{dram2}) can be readily checked, for example, by a computer package \texttt{vaxmacs}\footnote{This software is available at \texttt{http://dwilson.com/vaxmacs}.} written by David Wilson. The number of tilings of each of these dragon regions is returned in a second.

For the induction step, we assume that $k$ is an odd integer greater than 15 and that (\ref{dram1}) and (\ref{dram2}) hold for any $DR^{(i)}$-regions with perimeter less than $k$ ($k\geq 17$).
By the base cases,  we only need to show that (\ref{dram1}) holds for any triples $(a,b,c)$ in the domain
\[\overline{\mathcal{D}}=\{(a,b,c)\in \mathbb{Z}^3|\,\mathcal{P}^{(1)}(a,b,c)=k, b\geq 5,\overline{d}\geq0,\overline{e}\geq 0,c+\overline{d}\geq3\},\]
and that (\ref{dram2}) holds for any triples $(a,b,c)$ in the domain
\[\widetilde{\mathcal{D}}=\{(a,b,c)\in \mathbb{Z}^3|\, \mathcal{P}^{(2)}(a,b,c)=k, b\geq 4,\widetilde{d}\geq0,\widetilde{e}\geq 0,c+\widetilde{d}\geq3\}.\]

Next, we partition each of $\overline{\mathcal{D}}$ and $\widetilde{\mathcal{D}}$ into four subdomains as follows.
\[\overline{\mathcal{D}}_1=\overline{\mathcal{D}}\cap\{(a,b,c)|\,2\leq a\leq c+\overline{d}\};\quad \widetilde{\mathcal{D}}_1=\widetilde{\mathcal{D}}\cap\{(a,b,c)|\,2\leq a\leq c+\widetilde{d}\};\]
\[\overline{\mathcal{D}}_2=\overline{\mathcal{D}}\cap\{(a,b,c)|\,a\leq1\};\quad \widetilde{\mathcal{D}}_2=\widetilde{\mathcal{D}}\cap\{(a,b,c)|\,a\leq 1\};\]
\[\overline{\mathcal{D}}_3=\overline{\mathcal{D}}\cap\{(a,b,c)|\,a>c+\overline{d}, \overline{e}\geq \overline{d}\};\quad \widetilde{\mathcal{D}}_3=\widetilde{\mathcal{D}}\cap\{(a,b,c)|\,a>c+\widetilde{d}, \widetilde{e}\geq \widetilde{d}\};\]
\[\overline{\mathcal{D}}_4=\overline{\mathcal{D}}\cap\{(a,b,c)|\,a>c+\overline{d}, \overline{e}< \overline{d}\};\quad \widetilde{\mathcal{D}}_4=\widetilde{\mathcal{D}}\cap\{(a,b,c)|\,a>c+\widetilde{d}, \widetilde{e}< \widetilde{d}\}.\]
We will show that $\M\left(DR^{(1)}_{a,b,c}\right)$ and $\Phi(a,b,c)$, and $\M\left(DR^{(2)}_{a,b,c}\right)$ and $\Psi(a,b,c)$ agree in each of the above subdomains.

There are four cases to distinguish.

\medskip

\quad \textit{Case 1: $(a,b,c)\in \overline{\mathcal{D}}_1\cup \widetilde{\mathcal{D}}_1$.}

\medskip
First, we assume that $(a,b,c)\in \overline{\mathcal{D}}_1$.

We notice that the condition $b\geq 5$ guarantees that   $\overline{e} \geq 2$.  Indeed, since we are assuming that $a\leq c+\overline{d}$, we have $a\leq 2b-a-c+1$. By $\overline{d}\geq 0$, we get $2b-a-2c+1\geq 0$. Adding the above two inequalities, we get $3a+3c\leq4b+2$. Then, $\overline{e}=3b-2a-2c+1=3b-\frac{2}{3}(3a+3c)+1\geq \frac{b}{3}-\frac{1}{3}\geq \frac{4}{3}$. Since $\overline{e}$ is an integer, $\overline{e}\geq 2$.

Moreover, since we are assuming that $c+\overline{d}\geq 3$, then at least one of $c$ and $\overline{d}$ is greater than 2. We first assume that $\overline{d}\geq 2$.

\medskip

\emph{Case 1.a. $\overline{d}\geq 2$.}

\medskip

If $c\geq 1$, then, by Lemmas \ref{dragonlem2}(a) and \ref{lemnew}, $\M(DR^{(1)}_{a,b,c})$ and $\Phi(a,b,c)$ both satisfy the recurrence (\ref{R2}). One notices that all regions other than $DR^{(1)}_{a,b,c}$ in the equality (\ref{dragoneq1}) have perimeters strictly less than $k$. Thus, by induction hypothesis, we get
\[\M(DR^{(1)}_{a-3,b-3,c-2})=\Phi(a-3,b-3,c-2),\]
\[\M(DR^{(1)}_{a-2,b-1,c})=\Phi(a-2,b-1,c),\]
\[\M(DR^{(1)}_{a-1,b-2,c-2})=\Phi(a-1,b-2,c-2),\]
\[\M(DR^{(1)}_{a-1,b-1,c-1})=\Phi(a-1,b-1,c-1),\]
and
\[\M(DR^{(1)}_{a-2,b-2,c-1})=\Phi(a-2,b-2,c-1).\]
By the recurrence (\ref{R2}) and the above five equalities, we obtain
$\M(DR^{(1)}_{a,b,c})=\Phi(a,b,c).$

Similarly, if $c=0$, by Lemmas \ref{dragonlem2}(b) and \ref{lemnew}, the  pair of the numbers of tilings  $(\M(DR^{(1)}_{a,b,c}),$ $ \M(DR^{(2)}_{a,b,c}))$ and the pair of functions $(\Phi(a,b,c),\Psi(a,b,c))$ both satisfy the first equality in the recurrence (\ref{R3}) for $(\bigstar,\lozenge)$. Again, by induction hypothesis, we get (\ref{dram1}).

\medskip

\emph{Case 1.b. $c\geq 2$.}

\medskip

If $\overline{d}\geq 1$, then by Lemmas \ref{dragonlem3} and \ref{lemnew}, $\M(DR^{(1)}_{a,b,c})$ and $\Phi(a,b,c)$ both satisfy the recurrence (\ref{R4}); if $\overline{d}=0$, the later two functions satisfy the same recurrence (\ref{R5}). Then, by induction hypothesis one more time, we get (\ref{dram1}). This means that we have (\ref{dram1}) for any $(a,b,c)\in \overline{\mathcal{D}}_{1}$.

In the same way, if  $(a,b,c)\in \widetilde{\mathcal{D}}_1$, we always get (\ref{dram2}).

\begin{figure}
\centering
\includegraphics[width=8cm]{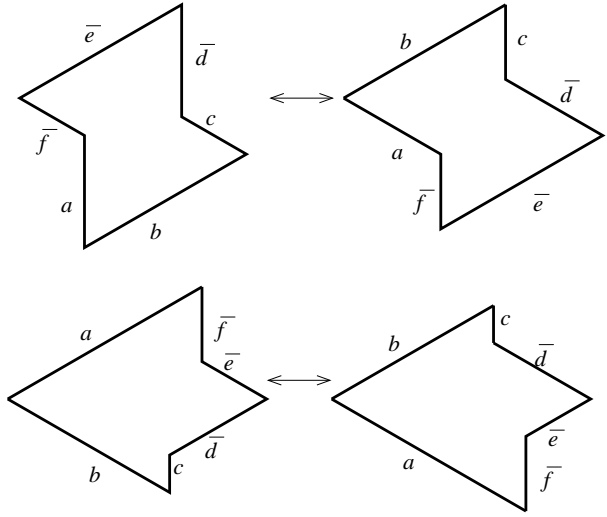}%
\caption{Obtaining a new hexagonal dungeon by flipping the old one.}
\label{fliphex}
\end{figure}

\medskip

\quad \textit{Case 2:} $(a,b,c)\in \overline{\mathcal{D}}_2\cup \widetilde{\mathcal{D}}_2$.

\medskip

We consider first the case when $(a,b,c)\in \overline{\mathcal{D}}_2$.  By flipping the region $DR^{(1)}_{a,b,c}$
 about the $b$-side of the contour $\mathcal{C}^{(1)}(a,b,c)$, we get the region $DR^{(1)}_{\overline{f},\overline{e},\overline{d}}$ (see the  upper row in Figure \ref{fliphex}). Thus,
\[\M(DR^{(1)}_{a,b,c})=\M(DR^{(1)}_{\overline{f},\overline{e},\overline{d}}).\]
On the other hand, by the definition of function $\Phi$, we obtain $\Phi(a,b,c)=\Phi(\overline{f},\overline{e},\overline{d}).$
Therefore, to prove (\ref{dram1}), it suffices to verify that
\begin{equation}\label{dram3}
\M(DR^{(1)}_{\overline{f},\overline{e},\overline{d}})=\Phi(\overline{f},\overline{e},\overline{d}).
\end{equation}

If $\overline{e}\leq 4$, then the triple $(\overline{f},\overline{e},\overline{d})$ are among the ones in the base cases, and (\ref{dram3}) holds. Thus, we can assume that $\overline{e}\geq 5$. We now want to show that $(\overline{f},\overline{e},\overline{d})\in \overline{\mathcal{D}}_1$, then (\ref{dram3}) follows from the Case 1 treated above. It is more convenient to re-write $\mathcal{D}_1$ with all constraints in terms of $a,b,c$ as
\begin{align}
\overline{\mathcal{D}}_1=\{&(a,b,c)\in \mathbb{Z}^3|\, \mathcal{P}^{(1)}(a,b,c)=k, b\geq 5,2b-a-2c+1\geq0,\notag\\
&3b-2a-2a+1\geq 0,2b-a-c+1\geq3, 2\leq a\leq 2b-a-c+1\}.\end{align}
The flips in Figure \ref{fliphex} do not change the perimeter, so the first constraint holds.  Since we assuming $\overline{e}\geq 5$, the second constraint also holds here. Next, we have
$2\overline{e}-\overline{f}-2\overline{d}+1=c\geq 0$,  $3\overline{e}-2\overline{f}-2\overline{d}+1=b\geq 0$, and $2\overline{e}+\overline{f}-\overline{d}+1=c+\overline{d}\geq 3$. Moreover,  $\overline{f}=c+\overline{d}-a+1\geq c+\overline{d}\geq 3$, and  the condition $\overline{f}\leq 2\overline{e}-\overline{f}-\overline{d}+1$ is equivalent to $a\geq 0$, which is obviously true. This means that $(\overline{f},\overline{e},\overline{d})$ is indeed in $\overline{\mathcal{D}}_1$.

Similarly, if $(a,b,c)\in \widetilde{\mathcal{D}}_{2}$, we also flip $DR^{(2)}_{a,b,c}$ over the $b$-side of the contour $\mathcal{C}^{(2)}(a,b,c)$ and get $DR^{(2)}_{\widetilde{f},\widetilde{e},\widetilde{d}}$. Then (\ref{dram2}) follows again from Case 1.

\medskip

\quad \textit{Case 3: } $(a,b,c) \in \overline{\mathcal{D}}_3\cup \widetilde{\mathcal{D}}_{3}$.

\medskip

Assume that $(a,b,c)\in \overline{\mathcal{D}}_3$.


 If $c \geq 2$, then $\M(DR^{(1)}_{a,b,c})$ and $\Phi(a,b,c)$ satisfy the same recurrence (\ref{R1}); and if $c=1$, then the latter functions both satisfy the recurrence (\ref{R2}). Then (\ref{dram1}) follows from the induction hypotheses. We also get (\ref{dram1}) if $c=0$ (in this case the first equality of (\ref{R3}) has been used).
 Thus, (\ref{dram1}) always holds when $(a,b,c)\in \overline{\mathcal{D}}_3$.

 Arguing similarly, we have (\ref{dram2}) holds when $(a,b,c)\in \widetilde{\mathcal{D}}_{3}$. 

\medskip

\textit{Case 4:} $(a,b,c) \in \overline{\mathcal{D}}_4\cup \widetilde{\mathcal{D}}_{4}$.

\medskip

Similar to Case 2, we want to reduce this case to the cases treated before by flipping our region in a suitable way.

If $(a,b,c)\in \overline{\mathcal{D}}_4$ (resp. $\widetilde{\mathcal{D}}_4$),  we get $DR^{(1)}_{b,a,\overline{f}}$ (resp. $DR^{(2)}_{b,a,\widetilde{f}}$)
 by flipping the region $DR^{(2)}_{a,b,c}$ (resp. $DR^{(1)}_{a,b,c}$) about the horizontal line passing the western vertex (see the lower row of Figure \ref{fliphex}). It implies that
\[\M(DR^{(1)}_{a,b,c})=\M(DR^{(2)}_{b,a,\widetilde{f}}) \quad\text{ and }\quad \M(DR^{(2)}_{a,b,c})=\M(DR^{(1)}_{b,a,\overline{f}}).\]
On the other hand, by the definition of the functions $\Phi$ and $\Psi$, we have
\[\Phi(a,b,c)=\Psi(b,a,\widetilde{f})\quad \text{ and } \quad \Psi(a,b,c)=\Phi(b,a,\overline{f}).\]
Therefore, we only need to verify that
\begin{equation}\label{check}
\M(DR^{(1)}_{b,a,\overline{f}})=\Phi(b,a,\overline{f}) \quad \text{ and } \quad \M(DR^{(2)}_{b,a,\widetilde{f}})=\Psi(b,a,\widetilde{f}).
\end{equation}
Let us prove the first equality in (\ref{check}). Similar to Case 2, by the base cases, we can assume $a\geq 7$ and $\overline{e}+\overline{f}\geq 3$. However,  now, we can check easily that $(b,c,\overline{f}) \in \overline{\mathcal{D}}_3$ or $\overline{\mathcal{D}}_1\cup \overline{\mathcal{D}}_2$ if $c=1$ or $0$, respectively. Then the first equality in (\ref{check}) follows from the cases treated before. Do similarly for the second equality in (\ref{check}). This finishes our proof.

\section{More applications of the method.}
In this section, we point out several further applications of the method used in the proof of Theorem \ref{dragonthm}.

Consider the weighted version $DR^{(i)}_{a,b,c}(x)$
 of  the dragon region $DR^{(i)}_{a,b,c}$ by assigning all tiles in Figure \ref{Tileofdragon} a weight 1, and each of other tiles an indeterminate weight $x>0$. Now, $\M(DR^{(i)}_{a,b,c}(x))$ is  the sum of weights of all tilings in $DR^{(i)}_{a,b,c}(x)$, where the weight of a tiling is the product of weights of its constituent tiles.

\begin{figure}\centering
\includegraphics[width=6cm]{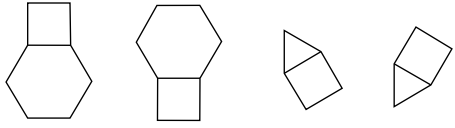}
\caption{All types of tiles having weight 1 in the regions $DR^{(i)}_{a,b,c}(x)$'s.}
\label{Tileofdragon}
\end{figure}

\begin{thm}\label{dragongen}
(a) Assume that $a$, $b$, and $c$ are three non-negative integers satisfying $b\geq 2$, $\overline{d}:=2b-a-2c+1\geq0$ and $\overline{e}:=3b-2a-2c+1\geq0$. Then
\begin{equation}
\M\left(DR^{(1)}_{a,b,c}(x)\right)=2^{\frac{(a-b)(a-b-1)}{2}}(x^2+1)^{A}(x^2+2)^{B}x^{C},
\end{equation}
where\\
$A=(b-c+1)(2b-a-c)+\frac{(b-a)(b-a-1)}{2},$\\
$B=\frac{(a-b+c)(a-b+c-1)}{2},$\\
and
$C=(a-b-1)^2+(a-b+c-1)(a-b+c)-c+\min(2a-2b+c-1,0).$

\medskip

(b) Moreover, if $ \widetilde{d}:=2b-a-2c-1\geq0$ and $ \widetilde{e}:=3b-2a-2c-1\geq0$, then
\begin{equation}
\M\left(DR^{(2)}_{a,b,c}(x)\right)=2^{\frac{(b-a)(b-a-1)}{2}}(x^2+1)^{A'}(x^2+2)^{B'}x^{C'},
\end{equation}
where\\
$A'=(b-c-1)(2b-a-c)+\frac{(a-b)(a-b-1)}{2},$\\
$B'=\frac{(a-b+c)(a-b+c+1)}{2},$\\
and
$C'=(b-a-1)^2+(a-b+c+1)(a-b+c)-\max(2a-2b+c+1,0).$
\end{thm}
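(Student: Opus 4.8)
The plan is to mirror exactly the strategy already carried out for Theorem \ref{dragonthm}, but now tracking the weighted partition function instead of the plain count. The key structural observation is that Kuo's Condensation Theorem \ref{kuothm} holds verbatim for \emph{weighted} perfect matchings, provided one weights each matching by the product of its edge weights; all the forced-edge reductions in Lemmas \ref{dragonlem1}--\ref{dragonlem3} remain valid, and the forced edges simply contribute a known multiplicative factor. Thus my first step is to re-run those three lemmas in the weighted setting. The delicate point is that removing forced edges no longer preserves the partition function on the nose: each forced edge carries a weight ($1$ or $x$ depending on its type in Figure \ref{Tileofdragon}), so the weighted analogue of each equation \eqref{dragoneq2}--\eqref{dragoneq18} acquires a monomial prefactor $x^{j}$ recording the total weight of the forced edges removed in that particular vertex-deletion. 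I would compute these prefactors once and for all by inspecting the figures; this bookkeeping is the part most prone to error.

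Once the weighted recurrences are in hand, I would define the candidate closed forms
\[
\Phi_x(a,b,c):=2^{\frac{(a-b)(a-b-1)}{2}}(x^2+1)^{A}(x^2+2)^{B}x^{C}
\]
and the analogous $\Psi_x(a,b,c)$ using $A',B',C'$, and verify directly that they satisfy the weighted analogues of the recurrences (R1)--(R5) — that is, a version of Lemma \ref{lemnew} in which each term is decorated by the appropriate power of $x$. Since $\Phi_x,\Psi_x$ are products of the four building blocks $2,\ (x^2+1),\ (x^2+2),\ x$, checking a recurrence reduces to matching exponents of each block separately: the exponents of $2$, of $(x^2+1)$, and of $(x^2+2)$ are quadratic/linear in $a,b,c$ and are verified exactly as in the unweighted proof (indeed setting $x=1$ must recover Lemma \ref{lemnew}, giving a useful consistency check, since $(1+1)=2$ and $(1+2)=3$ recover the bases $2$ and $3$), while the exponent $C$ of $x$ must match the monomial prefactors produced by the forced edges. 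The relation $x^2+2 = (x^2+1)+1$ is what makes the additive step of Kuo's identity close up, exactly as $3=2+1$ did in the integer case.

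For the overall architecture I would reuse the induction on perimeter $\mathcal{P}^{(i)}(a,b,c)$ with the same base cases and the same four-subdomain partition $\overline{\mathcal{D}}_1,\dots,\overline{\mathcal{D}}_4$ and $\widetilde{\mathcal{D}}_1,\dots,\widetilde{\mathcal{D}}_4$. Cases 1 and 3 go through by applying the weighted recurrences plus the induction hypothesis, verbatim. Cases 2 and 4, which use the flips of Figure \ref{fliphex}, require an extra check: a flip is a weight-preserving bijection on tilings only if it maps weight-$1$ tiles to weight-$1$ tiles and weight-$x$ tiles to weight-$x$ tiles. I would confirm from Figure \ref{Tileofdragon} that the tile types are symmetric under the relevant reflections, so that $\M(DR^{(1)}_{a,b,c}(x))=\M(DR^{(1)}_{\overline{f},\overline{e},\overline{d}}(x))$ and the cross-relation $\M(DR^{(1)}_{a,b,c}(x))=\M(DR^{(2)}_{b,a,\widetilde{f}}(x))$ continue to hold, matched on the algebraic side by the symmetries $\Phi_x(a,b,c)=\Phi_x(\overline{f},\overline{e},\overline{d})$ and $\Phi_x(a,b,c)=\Psi_x(b,a,\widetilde{f})$.

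The main obstacle I anticipate is pinning down the exponent $C$ (and $C'$) correctly, in particular the term $\min(2a-2b+c-1,0)$. This floor/min term signals that the number of forced weight-$x$ edges depends on whether $O$ lies above or below $S$ — i.e.\ on the sign of $a-c-\overline{d}$, equivalently on which of the two perimeter formulas applies. So the forced-edge count is genuinely case-dependent, and I expect the weighted recurrences themselves to carry different monomial prefactors in the regimes $a>c+\overline{d}$ versus $a\leq c+\overline{d}$. Getting $C$ to satisfy all five weighted recurrences across both regimes, and to be compatible with the two flips, is where the real work lies; everything else is a faithful transcription of the unweighted argument with $2\mapsto(x^2+1)$ and $3\mapsto(x^2+2)$.
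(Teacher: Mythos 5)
Your proposal matches the paper's approach: the paper explicitly omits the proof of Theorem \ref{dragongen} on the grounds that it is ``essentially the same as the proof of Theorem \ref{dragonthm},'' and your plan --- weighted Kuo condensation, weighted analogues of the recurrences (\ref{R1})--(\ref{R5}) and of Lemma \ref{lemnew}, the same perimeter induction with the same base cases and subdomains, plus the weight-compatibility check for the flips --- is exactly that transcription. You also correctly flag the genuine bookkeeping issues (forced-edge prefactors and the case-dependent exponent $C$), and your $x=1$ consistency check against Theorem \ref{dragonthm} confirms the exponents combine as they should.
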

We omit the proof of Theorem \ref{dragongen} here, because it is essentially the same as the proof of Theorem \ref{dragonthm}.

Next, we conclude the paper by introducing two new families of regions, which also have the numbers of tilings given by powers of $2$ and $3$.

\begin{figure}\centering
\includegraphics[width=9cm]{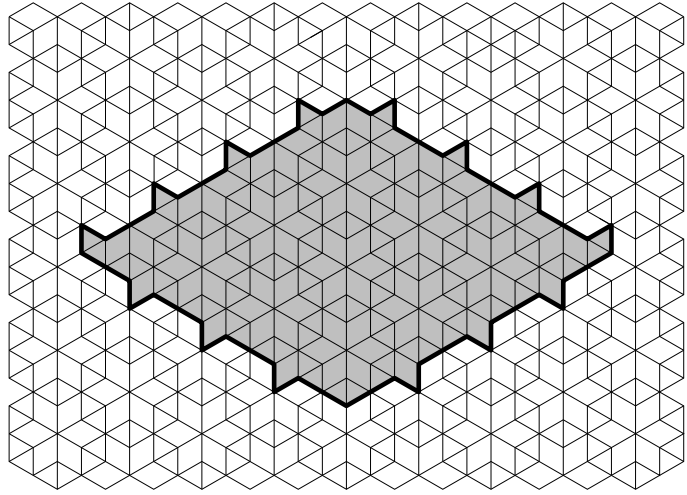}
\caption{The needle rhombus $R_3$.}
\label{newneddle}
\end{figure}

Searching over various families of regions having similar structure to the Aztec diamonds \cite{Elkies1,Elkies2}, we find several ones, in which the tilings are enumerated by  powers of $2$ and $3$. One of these families is the ``needle rhombi" introduced by the author (see Theorem  25 \cite{Tri2}). Figure \ref{newneddle} shows the needle rhombus of order $3$. Inspired by the dragon regions, we generalize the later regions to two new families of six-sided regions as follows.

Partition the triangular lattice into equilateral triangles of side 3, which we call \emph{basic triangles}. Remove all six lattice segments opposite to each vertex of these basic triangles. We get a new lattice, which the needle rhombi live in. Similar to the contours $\mathcal{C}^{(i)}(a,b,c)$, we consider a six-sided contour $\mathcal{C}(a,b,c)$ along the sides of the basic triangles having side-lengths $a,b,c,d:=2b-a-2c, e:=3b-2a-2c,f:=|2b-2a-c|$ (the unit here is the side-length of the basic triangles). Define a two families of  \emph{needle regions} $N^{(1)}_{a,b,c}$ and $N^{(2)}_{a,b,c}$ restricted by the above contour as in Figures \ref{needlenew1}(a) and (b) for the case $a>c+d$, and Figures \ref{needlenew1}(c) and (d) for the case $a\leq c+d$.

\begin{figure}\centering
\includegraphics[width=14cm]{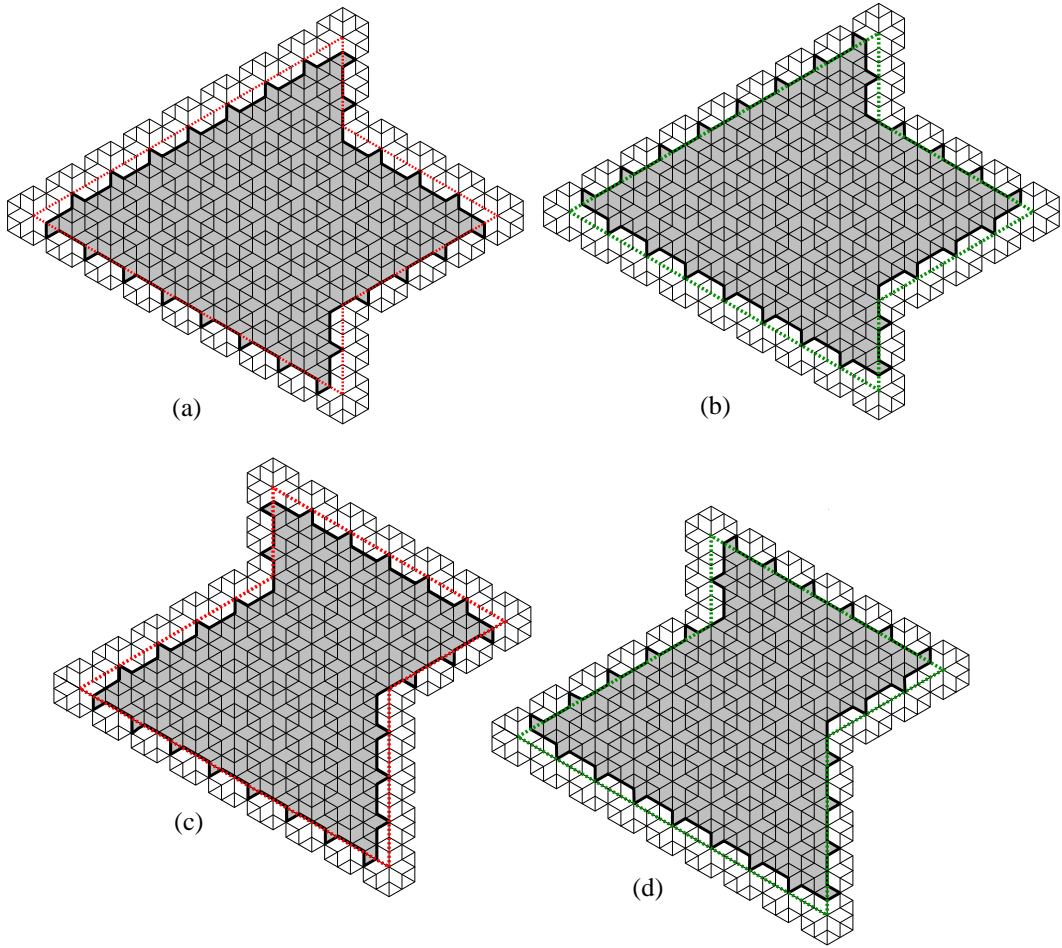}
\caption{The regions (a) $N^{(1)}_{8,8,2}$, (b) $N^{(2)}_{8,8,2}$, (c) $N^{(1)}_{5,8,4}$ and (d) $N^{(2)}_{5,8,4}$.}
\label{needlenew1}
\end{figure}

By repeating the process in enumerating the tilings of the dragon regions, we get the following tiling formulas for the needle regions.

\begin{thm}\label{needlethm}
Assume that $a$, $b$, and $c$ are three non-negative integers satisfying $b\geq 2$, $2b-a-2c\geq0$ and $3b-2a-2c\geq0$. Then

\begin{align}\label{needleeq1}\M(N^{(1)}_{a,b,c})&=2^{a^2-3ab+3b^2-3bc+c^2+ac+a-b+\frac{c}{2}-\frac{1}{2}|2a-2b+c|}\notag\\
&\times3^{(5a-7b+5c+1)(a-b+c)/2+b(b-1)-ac}\end{align}
and
\begin{align}\label{needleeq2}\M(N^{(2)}_{a,b,c})&=2^{a^2-3ab+3b^2-3bc+c^2+ac+a-b+\frac{c}{2}-\frac{1}{2}|2a-2b+c|}\notag\\
&\times3^{(5a-7b+5c+1)(a-b+c)/2+b(b-1)-ac+(b-a)+\min(2a-2b+c,0)}.\end{align}
\end{thm}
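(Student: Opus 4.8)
The plan is to mimic exactly the inductive strategy that proved Theorem~\ref{dragonthm}, transporting it wholesale to the needle lattice. First I would establish, in analogy with Lemmas~\ref{dragonlem1}--\ref{dragonlem3}, that the matching numbers $\M(N^{(1)}_{a,b,c})$ and $\M(N^{(2)}_{a,b,c})$ satisfy a system of Kuo-condensation recurrences. Concretely, I would take the dual graph $G$ of each needle region, pick four vertices $u,v,w,t$ in cyclic order on an outer face (west/west, north, south, as before), apply Theorem~\ref{kuothm}, and identify each of $G-\{u,v\}$, $G-\{w,t\}$, $G-\{t,u\}$, $G-\{v,w\}$, and $G-\{u,v,w,t\}$---after deleting forced edges---with smaller needle graphs. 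The needle lattice differs from the dragon lattice only in that the squares and hexagons are replaced by the ``needle'' gadgets, so the forced-edge reductions should produce needle regions with shifted parameters in precisely the same pattern $(a,b,c)\mapsto(a-1,b-1,c)$, etc., yielding recurrences of the shapes $(\ref{R1})$--$(\ref{R5})$ (possibly with slightly different index shifts, which I would read off directly from the figures rather than guess).

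Second, I would verify that the two conjectured closed forms on the right-hand sides of $(\ref{needleeq1})$ and $(\ref{needleeq2})$ satisfy the very same recurrences, the analogue of Lemma~\ref{lemnew}. Because both formulas are of the form $2^{P(a,b,c)}3^{Q(a,b,c)}$ with $P,Q$ piecewise-quadratic (the absolute value $|2a-2b+c|$ and the $\min$ term split the domain into cones), this reduces, on each cone, to checking that certain quadratic identities hold among the exponents after every term is divided through by a common monomial. This is the ``straightforward to verify'' step; I would simply record that the exponent differences match on each side of the $\min/|\cdot|$ breakpoints, and check continuity of the formula across those breakpoints so the induction can cross between cones.

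Third, I would set up the induction on the perimeter $\mathcal{P}(a,b,c)$ of the contour $\mathcal{C}(a,b,c)$, exactly paralleling Section~4. I would carve out finitely many base cases (small perimeter, small $b$, or small $c+d$), check them by hand or by the \texttt{vaxmacs} package, then partition the remaining parameter region into subdomains according to the sign of $a-c-d$ and the relative sizes of $d,e$. In the interior subdomains the chosen recurrence expresses $\M(N^{(i)}_{a,b,c})$ in terms of strictly-smaller-perimeter regions, so the induction hypothesis plus the matching recurrence for the closed form closes the argument. For the boundary subdomains ($a\leq 1$, or $a>c+d$ with $e<d$) I would use the two reflection symmetries of the lattice: flipping across the $b$-side sends $N^{(1)}_{a,b,c}$ to $N^{(1)}_{f,e,d}$ (and likewise for the tilde family), and flipping across the horizontal line through the western vertex interchanges the two families, $N^{(2)}_{a,b,c}\leftrightarrow N^{(1)}_{b,a,f}$. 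These symmetries both preserve the perimeter and are respected by the closed-form formulas, so they reduce the boundary subdomains to interior ones already handled.

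The main obstacle I anticipate is \emph{not} the induction bookkeeping, which is purely mechanical once the pieces are in place, but rather the first step: correctly identifying the forced-edge reductions on the needle lattice. The needle gadget has more internal structure than a square, so after deleting $u,v,w,t$ the cascade of forced matchings must be traced carefully to confirm that the leftover graph is genuinely a smaller \emph{needle} graph (and with which parameters), rather than some hybrid region. Getting the index shifts right---and in particular verifying that the reflection flips land in the stated families---is where the real content lies; once those reductions are pinned down, the remainder is a faithful transcription of the proof of Theorem~\ref{dragonthm}, which is why the author is justified in writing that the tilings ``are also enumerated by powers of $2$ and $3$'' by ``repeating the process.''
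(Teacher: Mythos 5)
Your proposal follows exactly the route the paper intends: the paper omits the proof of Theorem \ref{needlethm}, stating only that it is ``completely analogous'' to the proof of Theorem \ref{dragonthm}, and your plan---Kuo condensation recurrences for the needle graphs, verification that the closed-form exponents satisfy the same recurrences on each cone, and induction on the contour perimeter with finitely many base cases plus the two reflection flips to handle the boundary subdomains---is precisely that analogous argument. Your closing observation that the genuine content lies in tracing the forced-edge cascades on the needle lattice (so that the reduced graphs are again needle graphs with the correct shifted parameters) accurately identifies the one part of the omitted proof that is not a mechanical transcription.
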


The proof of Theorem \ref{needlethm} is completely analogous to that of Theorem \ref{dragonthm}, and will be omitted here.

\end{document}